\DeclareFontFamily{U}{wncy}{}
\DeclareFontShape{U}{wncy}{m}{n}{<->wncyr10}{}
\DeclareSymbolFont{mcy}{U}{wncy}{m}{n}
\DeclareMathSymbol{\Sha}{\mathord}{mcy}{"58}
\begin{document}
\newcommand{\Z}{\mathbb{Z}}
\newcommand{\Q}{\mathbb{Q}}
\newcommand{\F}{\mathbb{F}}
\newcommand{\kbar}{\overline{k}}
\newcommand{\pro}{\mathrm{Pro}}
\newcommand{\colim}{\mathrm{colim}}
\newcommand{\Fp}{\mathbb{F}_p}
\newcommand{\Hom}{\mathrm{Hom}}
\newcommand{\Spec}{\mathrm{Spec}}
\newcommand{\et}{\acute{e}t}
\newcommand{\Et}{\acute{E}t}
\newcommand{\eT}{\textup{\'et}}
\newcommand{\Etr}{\acute{E}t^{\natural}_{/k}}
\newcommand{\xto}{\xrightarrow}
\newcommand{\Gal}{\mathrm{Gal}}
\newcommand{\Aut}{\mathrm{Aut}}
\newcommand{\nth}{n^{\text{th}}}
\newcommand{\A}{\mathbb{A}}
\newcommand{\Br}{\operatorname{Br}}
\newcommand{\Jac}{\operatorname{Jac}}
\newcommand{\Pic}{\operatorname{Pic}}
\newcommand{\Xbar}{\overline{X}}
\newcommand{\Ybar}{\overline{Y}}
\newcommand{\Res}{\operatorname{Res}}
\newcommand{\Sel}{\operatorname{Sel}}
\newcommand{\ksep}{k^s}
\newcommand{\res}{\operatorname{res}}
\newcommand{\NS}{\operatorname{NS}}

\newtheorem{thm}{Theorem}[section]
\newtheorem{cor}[thm]{Corollary}
\newtheorem{lemma}[thm]{Lemma}
\newtheorem{propn}[thm]{Proposition}
\newtheorem{prop}[thm]{Property}
\theoremstyle{definition}
\newtheorem{example}[thm]{Example}
\newtheorem{defn}[thm]{Definition}
\newtheorem{rem}[thm]{Remark}
\newtheorem{con}[thm]{Construction}
\newtheorem{qst}[thm]{Question}
\newtheorem{notation}[thm]{Notation}
\newtheorem{conj}[thm]{Conjecture}
\title{Galois invariants of finite abelian descent and Brauer sets}
\author{Brendan Creutz, Jesse Pajwani and Jos\'e Felipe Voloch}
\address{School of Mathematics and Statistics, University of Canterbury, Private
 Bag 4800, Christchurch 8140, New Zealand}
\email{brendan.creutz,jesse.pajwani,felipe.voloch@canterbury.ac.nz}

\subjclass{11G20,11G30,14G05,14H25}

\maketitle

\begin{abstract}
For a variety over a global field, one can consider
subsets of the set of adelic points of the variety cut out 
by finite abelian descent or Brauer-Manin obstructions. Given a
Galois extension of the ground field one can consider similar sets over the extension
and take Galois invariants. In this paper, we study under which circumstances
the Galois invariants recover the obstruction sets over the ground field.
As an application of our results, we study finite abelian descent and
Brauer-Manin obstructions for isotrivial curves over function fields and
extend results obtained by the first and last authors for constant curves to
the isotrivial case.
\end{abstract}

\section{Introduction}

Let $X$ be a variety over a global field $k$ (i.e., a separated scheme of finite type $X \to \Spec(k)$) and assume that $X$ is geometrically integral. It is well known that the set $X(k)$ of $k$-rational points of $X$ may fail to be dense in the set $X(\A_k)$ of adelic points of $X$, as there can be cohomological obstructions mediated by the Brauer group $\Br(X)$ and/or by the finite abelian descent obstruction \cite{PoonenBook,SkorobogatovBook} (there are other obstructions, e.g. descent by all linear algebraic groups, but these are not the focus of this paper). It is clear that the assignment $L \mapsto X(L)$ defines a sheaf of sets on $\Spec(k)_\eT$ meaning that, $X(L)^{\Gal(L/k)} = X(k)$ for any finite Galois extension $L/k$. We investigate whether the sets cut out by these cohomological obstructions also define sheaves in this sense.

For a torsor $f: Y \to X$ under a finite abelian group scheme $G$ over $k$, we recall the definition of the set $X(\A_k)^f \subset X(\A_k)$ of adelic points which survive $f$ \cite[Definition 5.2]{Stoll}. The torsor $f : Y \to X$ represents a class in the fppf cohomology group $H^1(X,G)$. For each place $v$ of $k$, this defines a map $X(k_v) \ni x_v \mapsto x_v^*f \in H^1(k_v,G)$ from the set of points over the completion $k_v$ of $k$ to the fppf cohomology group $H^1(k_v,G)$. Then $X(\A_k)^f$ consists of those adelic points $x = (x_v)$ such that the family of local cohomology classes $(x_v^*f) \in \prod_v H^1(k_v,G)$ lies in the diagonal image of the global cohomology group $H^1(k,G)$. 

For a finite separable extension $L/k$, let $X(\A_L)^f$ to denote the set of $L$-adelic points on $X$ which survive the base changed torsor $f_L : Y_L \to X_L$. There is a canonical inclusion $X(\A_k) \subset X(\A_L)$. When $L/k$ is Galois, the natural action of $\Gal(L/k)$ on $X(\A_L)$ gives a bijection $X(\A_k) = X(\A_L)^{\Gal(L/k)}$ and an inclusion $X(\A_k)^f  \subset (X(\A_L)^f)^{\Gal(L/k)}$ (See \cite[Proposition 5.16]{Stoll}). Our first main result is that $L \mapsto X(\A_L)^f$ defines an \'etale sheaf if and only if the only separable point of $G$ is the identity. We note that any nontrivial group scheme over a number field contains nontrivial separable points. However if $k$ has characteristic $p > 0$, then there are nontrivial group schemes such as $G = \mu_p$ which have no nontrivial separable points.

\begin{thm}\label{thm:MainThm2}
	Let $X$ be a geometrically integral variety over a global field $k$ admitting a geometrically irreducible torsor $f: Y \to X$ under a finite abelian group scheme $G$ over $k$. Then the following are equivalent.
	\begin{enumerate}[(a)]
		\item\label{cond:1} For every finite separable extension $K/k$ and finite Galois extension $L/K$ we have $$X(\A_K)^f = (X(\A_L)^f)^{\Gal(L/K)}\,.$$
		\item\label{cond:2} For $\ksep$ the separable closure of $k$, $G(\ksep) = 0$.
	\end{enumerate}
\end{thm}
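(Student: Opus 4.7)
The inclusion $X(\A_K)^f \subset (X(\A_L)^f)^{\Gal(L/K)}$ is already recorded above, so the substance of (a) is the opposite inclusion; I would treat the two implications of the equivalence separately.

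For $(b)\Rightarrow(a)$, assume $G(\ksep)=0$, so $G(M)=0$ for every intermediate field $K\subset M\subset\ksep$. The inflation--restriction sequence in fppf cohomology then yields the isomorphism
\[
H^1(K,G)\xrightarrow{\sim}H^1(L,G)^{\Gal(L/K)},
\]
together with local analogues $H^1(K_v,G)\xrightarrow{\sim}H^1(L_w,G)^{\Gal(L_w/K_v)}$; combining the local isomorphisms across places $w\mid v$ by Shapiro's lemma produces the compatibility $\prod_v H^1(K_v,G)\cong\big(\prod_w H^1(L_w,G)\big)^{\Gal(L/K)}$. Given $(x_v)\in(X(\A_L)^f)^{\Gal(L/K)}$, one has $(x_v)\in X(\A_K)$; setting $c_v:=x_v^*f\in H^1(K_v,G)$, the hypothesis produces some $\xi\in H^1(L,G)$ with $\xi|_{L_w}=c_v|_{L_w}$ for every $w\mid v$. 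A direct check (using that the $c_v$ come from $K_v$) shows that $\sigma\xi$ and $\xi$ have equal local images, so $\sigma\xi-\xi\in\Sha^1(L,G):=\ker(H^1(L,G)\to\prod_w H^1(L_w,G))$ for every $\sigma\in\Gal(L/K)$; thus $\xi$ is Galois-invariant modulo $\Sha^1(L,G)$. If $\xi$ can be replaced within its $\Sha^1$-coset by a genuinely $\Gal(L/K)$-invariant class, that invariant class descends via the above isomorphism to a global class in $H^1(K,G)$ realizing $(x_v)\in X(\A_K)^f$. The principal technical step is therefore the vanishing of the obstruction in $H^1(\Gal(L/K),\Sha^1(L,G))$; I would attack this via Poitou--Tate duality, using that the image of the global-to-local map is the annihilator of restrictions from the Cartier dual together with the identity $\mathrm{cor}\circ\mathrm{res}=[L:K]$, and handling the case $\gcd([L:K],|G|)>1$ by filtration through an intermediate tower. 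This is where I expect the substantive work to lie.

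For $(a)\Rightarrow(b)$, I argue the contrapositive. If $G(\ksep)\neq 0$, pick $K/k$ finite separable with $G(K)\neq 0$. For a suitable finite cyclic Galois extension $L/K$ (e.g.\ of degree equal to the order of some nontrivial element of $G(K)$), the group $H^1(\Gal(L/K),G(L))$ is nonzero, and this is precisely the kernel of restriction $H^1(K,G)\to H^1(L,G)$. Via Poitou--Tate duality for the Cartier dual of $G$, this translates into the existence of families $(c_v)\in\prod_v H^1(K_v,G)$ that globalize over $L$ under restriction but fail to globalize over $K$. The remaining step is to realize such a family as an evaluation $(c_v)=(x_v^*f)$ of a $K$-adelic point on $X$; here the geometric irreducibility of $f$ together with density-type arguments for adelic points (such as applications of Chebotarev's theorem) should produce the needed $(x_v)$, which then lies in $(X(\A_L)^f)^{\Gal(L/K)}\setminus X(\A_K)^f$. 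The principal obstacle in this direction is the explicit construction of this adelic point.
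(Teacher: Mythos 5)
Both halves of your proposal contain genuine gaps, and in both cases the missing ingredient is the Hasse principle for finite group schemes with no separable points (the ``Main Theorem'' of Gonz\'alez-Avil\'es and Tan), which the paper uses crucially.

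For $(b)\Rightarrow(a)$: you correctly identify fppf inflation--restriction as giving $H^1(K,G)\xrightarrow{\sim} H^1(L,G)^{\Gal(L/K)}$, but then you worry that the class $\xi\in H^1(L,G)$ attached to $(x_v)$ is only Galois-invariant modulo $\Sha^1(L,G)$, and propose to attack the resulting obstruction in $H^1(\Gal(L/K),\Sha^1(L,G))$ via Poitou--Tate. In fact this obstruction is vacuous: when $G(\kbar^s)=0$ one has $\Sha^1(L,G)=0$, so the lower horizontal map $H^1(L,G)\to\prod_{v}\prod_{w\mid v}H^1(L_w,G)$ is injective. This forces $\xi$ to be honestly $\Gal(L/K)$-invariant (not merely invariant modulo $\Sha^1$), after which inflation--restriction produces the descended class in $H^1(K,G)$ and injectivity of the top horizontal map gives $x\in X(\A_K)^f$. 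The substantive work you anticipate does not arise; the key input is that the diagram relating global and local cohomology over $K$ and $L$ consists entirely of injections.

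For $(a)\Rightarrow(b)$ the gap is more serious. First, if $(c_v)$ comes from a class in $\ker\bigl(H^1(K,G)\to H^1(L,G)\bigr)=H^1(\Gal(L/K),G(L))$, then by definition it \emph{does} globalize over $K$, so your setup does not directly produce an adelic point in $(X(\A_L)^f)^{\Gal(L/K)}\setminus X(\A_K)^f$. What is needed is a family $(c_v)\in\prod_v H^1(K_v,G)$ that is not in the image of $H^1(K,G)$ but whose restriction to each $L_w$ \emph{is} in the image of $H^1(L,G)$; the extension $L$ must then be chosen after the family, tailored to kill the offending local classes. Second, you say ``density-type arguments'' and Chebotarev should realize $(c_v)$ as an evaluation $(x_v^*f)$, but this is where the real construction lies and it is not routine: one needs a twist $Y^\xi$ by a class $\xi$ that is \emph{not finitely supported}, so that Lang--Weil and Hensel produce local points of $Y^\xi$ at infinitely many places whose evaluations disagree with every global class at many places simultaneously. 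Third, and most importantly, you treat only the case $G$ \'etale. When $G(\kbar^s)\neq 0$ but $G$ is not \'etale, $f$ factors as a $G_e$-torsor over a $G/G_e$-torsor $Z\to X$, and one must work with the (possibly non-reduced, non-smooth) intermediate $Z$; the paper handles this by passing to $Z_{\mathrm{red}}$, showing (after a base change) that it has a smooth open subscheme and that adelic points and survival are unchanged. The final descent from $Z$ back to $X$ then uses $\Sha^1(k,G/G_e)=0$ and the separability of $k_v$-points to show that the constructed adelic point on $X$ does not survive $f$. None of these steps is addressed in your sketch.
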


When $k$ is a number field with real places, it is fairly easy to construct examples of varieties $X$ with $X(\A_k)^f \ne (X(\A_L)^f)^{\Gal(L/k)}$ by modifying an adelic point in $X(\A_k)^f$ at a real place. In Theorem~\ref{thm:etale} we prove a stronger result showing that the failure of $L \mapsto X(\A_L)^f$ to define an \'etale sheaf when $G(\ksep) \ne 0$ as asserted in Theorem~\ref{thm:MainThm2} is not solely accounted for by issues at the archimedean places. 

When $X$ is proper, the space $X(\A_k)_\bullet$ of connected components of $X(\A_k)$ is naturally identified with the product of the spaces $X(k_v)_\bullet$ of connected components of the $X(k_v)$. The evaluation maps $X(k_v) \to H^1(k_v,G)$ featuring in the definition of $X(\A_k)^f$ are locally constant and so must factor through $X(k_v)_\bullet$. Thus, when $X$ is proper one is naturally led, as in \cite{Stoll}, to consider the set $X(\A_k)_\bullet^f$ of connected components which contain a point of $X(\A_k)^f$. 

For a finite Galois extension $L/k$, the inclusion $X(\A_k) \subset X(\A_L)$ induces maps $X(\A_k)_\bullet \to X(\A_L)_\bullet$ and $X(\A_k)^f_\bullet \to X(\A_L)^f_\bullet$ which are injective, except possibly when there is a real place $v$ of $k$ lying below only complex places of $L$. In the exceptional case $\prod_{w \mid v}X(L_w)_\bullet = \prod_{w \mid v}X(\mathbb{C})_\bullet$ is a singleton, and the induced action of $\Gal(L/k)$ is trivial.
This gives an action of $\Gal(L/k)$ on $X(\A_L)_\bullet$ such that $X(\A_L)_\bullet^{\Gal(L/k)}$ is equal to the image of $X(\A_k)_\bullet$ in $X(\A_L)_\bullet$. With this notation, Condition~\eqref{cond:1} in Theorem~\ref{thm:MainThm2} implies the following:
\begin{enumerate}[(a')]
\item\label{cond:1b} For every finite separable extension $K/k$ and finite Galois extension $L/K$, the image of $X(\A_K)_\bullet^f$ in $X(\A_L)_\bullet$ is equal to $(X(\A_L)_\bullet^f)^{\Gal(L/K)}$.
\end{enumerate}
Our results show that~\eqref{cond:1} and (a') are in fact equivalent when $X$ is proper (See Remark~\ref{rem:a'}). We will say that $L \mapsto X(\A_L)_\bullet^f$ defines an \'etale sheaf if (a') holds. Note, however, that if $k$ contains real places this may fail to define a sheaf of sets on $\Spec(k)_{\textup{\'et}}$. It is a presheaf that satisfies the axiom of glueing, but the axiom of locality can fail because the maps $X(\A_K)^f_\bullet \to X(\A_L)^f_\bullet$ may not be injective.

For a smooth, projective and geometrically integral variety $X$ over $k$ define the finite abelian descent set $X(\A_k)_\bullet^{\textup{f-ab}} := \bigcap_{f : Y \to X} X(\A_k)_\bullet^f$, where the intersection is taken over all (not necessarily geometrically irreducible) torsors under finite abelian group schemes. For clarity, we emphasise that the sets $X(\A_L)_\bullet^{\textup{f-ab}}$ for finite extensions $L/k$ are formed by taking the intersection over torsors defined over $L$, not just those arising as base change from $k$. This finite abelian descent set is closely related to the Brauer-Manin obstruction. By \cite[Theorem 6.1.1]{SkorobogatovBook} there are containments 
\[
	X(k) \subset \overline{X(k)} \subset X(\A_k)_\bullet^{\Br} \subset X(\A_k)_\bullet^{\textup{f-ab}}\,,
\]
where $X(\A_k)_\bullet^{\Br}$ denotes the set of (connected components containing) adelic points orthogonal to the Brauer group of $X$, and $\overline{X(k)}$ denotes the topological closure of (the image of) $X(k)$ in $X(\A_k)_\bullet$. If, moreover, $X \subset A$ is a subvariety of an abelian variety one also has
\begin{equation}\tag{$\dagger$}\label{diagram1}
	\xymatrix{
	\overline{X(k)} \ar@{}[r]|-*[@]{\subset}\ar@{}[d]|-*[@]{\subset} & X(\A_k)_\bullet^{\Br} \ar@{}[r]|-*[@]{\subset}\ar@{}[d]|-*[@]{\subset} & X(\A_k)_\bullet^{\textup{f-ab}} \ar@{}[d]|-*[@]{\subset}\\
	X(\A_k)_\bullet \cap \overline{A(k)} \ar@{}[r]|-*[@]{\subset} & X(\A_k)_\bullet \cap A(\A_k)_\bullet^{\Br} \ar@{}[r]|-*[@]{\subset} & X(\A_k)_\bullet \cap A(\A_k)_\bullet^{\textup{f-ab}}\,.
	}
\end{equation}
For the following conjecture we refer to {\cite[p. 133]{SkorobogatovBook}, \cite[Section 8]{Stoll}, \cite[Question 7.4]{PoonenHeuristic} and \cite[Conjecture C]{PoonenVoloch}}.

\begin{conj}\label{conj1}
	For $X$ a smooth closed subvariety of an abelian variety $A$ over a global field $k$ all of the containments in~\eqref{diagram1} are equalities.	
\end{conj}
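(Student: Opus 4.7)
The plan is to split Conjecture~\ref{conj1} into two parts: first, establish that the bottom row of \eqref{diagram1} consists of equal sets; then, show that the leftmost bottom entry is contained in $\overline{X(k)}$. Any single such inclusion collapses the entire diagram, because the remaining containments already force all six sets to coincide.

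The bottom row equalities reduce entirely to assertions about the abelian variety $A$: it suffices to prove $\overline{A(k)} = A(\A_k)_\bullet^{\Br} = A(\A_k)_\bullet^{\textup{f-ab}}$ and intersect with $X(\A_k)_\bullet$. The equality of the finite abelian descent set with the Brauer-Manin set on $A$ follows from the analysis of \cite{Stoll}, while $\overline{A(k)} = A(\A_k)_\bullet^{\Br}$ is the Brauer-Manin conjecture for abelian varieties. Over number fields this is classically known assuming finiteness of $\Sha(A/k)$, and analogous statements are available over function fields under appropriate finiteness hypotheses, with care required for $p$-primary contributions in characteristic $p>0$.

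To collapse the diagram, one must prove the single inclusion $X(\A_k)_\bullet \cap \overline{A(k)} \subseteq \overline{X(k)}$: any adelic point of $X$ that can be approximated by $k$-rational points of $A$ should in fact be approximable by $k$-rational points of $X$. This step is the genuinely new content of Conjecture~\ref{conj1}. The given approximating points in $A(k)$ need not lie on $X$, and although $X$ is closed in $A$, proximity to $X$ in the adelic topology does not promote to set-theoretic containment. A natural approach in the curve case is to combine Chabauty-Coleman techniques, which constrain $A(k)$ analytically near $X$ at a single place of good reduction, with the full adelic hypothesis and an iterative refinement to extract honest $X(k)$-points.

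The main obstacle is that Conjecture~\ref{conj1} remains widely open. Beyond the conditional status of the Brauer-Manin conjecture for abelian varieties, the key collapse step is essentially a Mordell-Lang type principle for adelic closures: even for $X$ a curve embedded in its Jacobian, no unconditional proof is known in full generality. A complete resolution would likely demand new geometric and diophantine input linking the analytic geometry of $X \subset A$ to the behavior of $A(k)$ across all places simultaneously.
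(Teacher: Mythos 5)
The statement is labeled a conjecture, not a theorem: the paper does not prove it, and it remains open in general. The paper records only partial results (Scharaschkin when $A(k)$ and $\Sha(A/k)$ are finite; Poonen--Voloch and Creutz--Voloch for nonisotrivial coset-free subvarieties of abelian varieties, respectively nonisotrivial curves of genus $\ge 2$, over global function fields). Your concluding paragraph correctly concedes that the conjecture is open, so there is no paper proof against which to compare your proposal.

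That said, your structural reduction is sound: it does suffice to show the three sets in the bottom row of~\eqref{diagram1} coincide — which reduces to $\overline{A(k)} = A(\A_k)_\bullet^{\Br} = A(\A_k)_\bullet^{\textup{f-ab}}$ on the ambient abelian variety and then intersecting with $X(\A_k)_\bullet$ — and then to prove $X(\A_k)_\bullet \cap \overline{A(k)} \subseteq \overline{X(k)}$, after which the extremal (top-left and bottom-right) sets of the diagram coincide. Two remarks. First, the equality $A(\A_k)_\bullet^{\Br} = A(\A_k)_\bullet^{\textup{f-ab}}$ is not due to Stoll alone: over number fields it rests on Creutz's theorem that abelian varieties carry no transcendental Brauer--Manin obstruction (Stoll covers the algebraic part), while over function fields one needs the divisible subgroup of $\Sha(A/k)$ to vanish, as the paper itself notes in Lemma~\ref{lem:2}. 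Second, your diagnosis of the collapse step $X(\A_k)_\bullet \cap \overline{A(k)} \subseteq \overline{X(k)}$ as the genuine diophantine content — a Mordell--Lang-type statement for adelic closures — is apt; this is precisely what the Poonen--Voloch argument furnishes in the nonisotrivial function-field case. Chabauty--Coleman at a single place of good reduction, however, is unlikely to be the right tool in general: it bounds $\#X(k)$ rather than controlling how $\overline{A(k)}$ sits near $X(\A_k)_\bullet$ across all places simultaneously, which is the adelic approximation statement actually required.
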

This conjecture has been proven when $A(k)$ and $\Sha(A/k)$ are finite \cite{Scharaschkin} and for most nonisotrivial coset-free subvarieties of abelian varieties over global function fields \cite{PoonenVoloch}, including all nonisotrivial curves of genus at least $2$ over global function fields \cite{CVNonIsotriv}.

It is not difficult to show that the bottom left set of~\eqref{diagram1} defines an \'etale sheaf (see Lemma~\ref{lem:1}). The following theorem states that the bottom right set of~\eqref{diagram1} also defines an \'etale sheaf, as predicted by Conjecture~\ref{conj1}.

\begin{thm}\label{thm:Main2}
	Suppose $X \subset A$ is a smooth closed subvariety of an abelian variety over a global field $k$. Then for any finite Galois extension $L/k$ the image of $X(\A_k)_\bullet \cap A(\A_k)_\bullet^{\textup{f-ab}}$ in $X(\A_L)_\bullet$ is equal to $\left(X(\A_L)_\bullet \cap A(\A_L)_\bullet^{\textup{f-ab}} \right)^{\Gal(L/k)}\,.$
\end{thm}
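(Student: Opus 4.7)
My plan for the containment $\subset$ is a base-change argument. Given $(P_v) \in A(\A_k)^{\textup{f-ab}}_\bullet$ and an $L$-torsor $h : C \to A_L$ under a finite abelian $L$-group scheme $H$, I would form the $k$-torsor on $A$ obtained by pulling the Weil restriction $\Res_{L/k}(h)$ back along the unit $A \to \Res_{L/k}(A_L)$. Shapiro's lemma identifies $H^1(k, \Res_{L/k} H) \cong H^1(L, H)$ and $H^1(k_v, \Res_{L/k} H) \cong \prod_{w \mid v} H^1(L_w, H)$, so survival of $(P_v)$ against the pulled-back $k$-torsor is equivalent to survival of $(P_w)$ against $h$.

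For the reverse containment, take $(P_w) \in (X(\A_L)_\bullet \cap A(\A_L)^{\textup{f-ab}}_\bullet)^{\Gal(L/k)}$. The \'etale sheaf property $X(\A_L)_\bullet^{\Gal(L/k)} = X(\A_k)_\bullet$ yields a unique $(P_v) \in X(\A_k)_\bullet$ mapping to $(P_w)$, reducing the task to proving $(P_v) \in A(\A_k)^{\textup{f-ab}}_\bullet$. Fix a $k$-torsor $g : B \to A$ under a finite abelian $k$-group scheme $G$; it suffices to show that $(P_v^* g)_v \in \prod_v H^1(k_v, G)$ lies in the image of $H^1(k, G)$. By hypothesis $(P_w^* g_L)_w$ lifts to some $\eta \in H^1(L, G_L)$; the set of such lifts is a torsor over $\Sha^1(L, G_L)$, and Galois invariance of the adelic tuple produces a cocycle class in $H^1(\Gal(L/k), \Sha^1(L, G_L))$ obstructing a Galois-equivariant choice. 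Once a Galois-invariant $\eta$ is in hand, inflation-restriction reduces the descent of $\eta$ to $H^1(k, G)$ to the vanishing of a transgression in $H^2(\Gal(L/k), G(L))$; both obstructions are annihilated by $[L:k]$.

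The main obstacle will be forcing these $[L:k]$-torsion obstructions to vanish. My strategy is to leverage that $(P_w)$ satisfies the $\textup{f-ab}$ condition against \emph{every} $L$-torsor, not just against $g_L$. For an abelian variety the finite abelian descent set is controlled by the Kummer torsors $[n] : A \to A$: any torsor under a $G$ of exponent $n$ arises, up to a global twist in $H^1(k, G)$, as the pushout of $[n]$ along some homomorphism $A[n] \to G$, so it suffices to treat $g = [n]$ and vary $n$. For $n$ coprime to $[L:k]$ the restriction map $H^1(k, A[n]) \to H^1(L, A[n])^{\Gal(L/k)}$ is an isomorphism, since its kernel and cokernel lie in $H^*(\Gal(L/k), A[n](L))$, which is annihilated by $\gcd(n, [L:k]) = 1$; descent is automatic and the $\Sha^1$-cocycle issue trivializes. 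For the primary part at primes dividing $[L:k]$, I would combine the $\textup{f-ab}$ conditions at all multiples $nm$ with $m$ coprime to $[L:k]$, and use a Poitou-Tate pairing together with the inverse-limit structure of $\varprojlim_m H^i(k, A[nm])$ to force the remaining obstructions to vanish; in positive characteristic the $p$-primary part, where $A[p]$ may fail to be \'etale, is handled via the corresponding flat Kummer sequence.
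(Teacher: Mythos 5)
Your plan for the $\subseteq$ inclusion (pullback along the unit $A\to\Res_{L/k}(A_L)$ plus Shapiro's lemma) is sound, and the reduction of the f-ab condition on an abelian variety to the Kummer torsors $[n]:A\to A$ is correct and also implicit in the paper. However, your route for the reverse containment is genuinely different from the paper's and, as written, has a real gap exactly where the difficulty lies.

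The paper works entirely with the image of $\widehat{\Sel}(A/k)=\varprojlim_n\Sel^n(A/k)$ inside $A(\A_k)_\bullet$ via the Cassels--Tate dual exact sequence. Starting from $x_L\in\widehat{\Sel}(A/L)^{\Gal(L/k)}$ with lift $x_k\in A(\A_k)_\bullet$, it first shows $dx_k\in\widehat{\Sel}(A/k)$ where $d=[L:k]$ (because passing to a degree-$d$ local extension multiplies the local Tate pairing by the local degree), then produces a $d$-th root $x'_k\in\widehat{\Sel}(A/k)$ of $dx_k$ by untwisting. The crux is that $x'_L-x_L$ is a $d$-torsion element of $\widehat{\Sel}(A/L)$, and the paper invokes Stoll's Theorem~3.11 (number field case) or Poonen--Voloch Proposition~5.3 together with R\"ossler's work (function field case) to conclude that this forces $x'_L-x_L\in A[d](L)$; Galois descent of rational points then finishes. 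Your proposal never confronts this step. Your obstruction classes in $H^1(\Gal(L/k),\Sha^1(L,A[n]))$ and $H^2(\Gal(L/k),A[n](L))$ do \emph{not} vanish in general when $n$ is a power of a prime dividing $d$, and the appeal to ``a Poitou--Tate pairing together with the inverse-limit structure'' and a ``flat Kummer sequence'' in characteristic $p$ is not an argument: it is precisely at this point that one needs the nontrivial theorem that torsion adelic points lying in $\widehat{\Sel}(A/L)$ come from $A(L)_{\mathrm{tors}}$, a deep finiteness statement that neither an inverse-limit formalism nor generic Poitou--Tate duality supplies. So while your prime-to-$[L:k]$ reduction is fine, the $[L:k]$-primary descent is missing the essential input, and I'd encourage you to look at how the Cassels--Tate dual sequence together with the cited torsion theorems closes this gap in the paper's proof of Theorem~\ref{thmSelhat}.
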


\begin{cor}\label{cor:Main2}
	Let $X \subset A$ be a smooth closed subvariety of its Albanese variety embedded by an Albanese map (i.e., a morphism $\phi:X \to A$ determined by a $0$-cycle of $Z$ degree $1$ on $X$ and defined by $\phi(P) = [P-Z] \in \operatorname{Alb}^0_X = A$). Assume that the N\'eron-Severi group of $X$ is torsion free. Then for any finite Galois extension $L/k$ the image of $X(\A_k)_\bullet^{\textup{f-ab}}$ in $X(\A_L)_\bullet$ is equal to $\left(X(\A_L)_\bullet^{\textup{f-ab}} \right)^{\Gal(L/k)}\,.$
\end{cor}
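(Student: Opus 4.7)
The plan is to deduce this corollary from Theorem~\ref{thm:Main2} by proving the identity
\[
X(\A_F)_\bullet^{\textup{f-ab}} = X(\A_F)_\bullet \cap A(\A_F)_\bullet^{\textup{f-ab}}
\]
for any finite separable extension $F/k$, and then applying Theorem~\ref{thm:Main2} to rewrite both sides of its conclusion. Granting the identity, one obtains immediately
\[
\text{image of } X(\A_k)_\bullet^{\textup{f-ab}} \text{ in } X(\A_L)_\bullet = \text{image of } X(\A_k)_\bullet \cap A(\A_k)_\bullet^{\textup{f-ab}} = \bigl(X(\A_L)_\bullet \cap A(\A_L)_\bullet^{\textup{f-ab}}\bigr)^{\Gal(L/k)} = \bigl(X(\A_L)_\bullet^{\textup{f-ab}}\bigr)^{\Gal(L/k)},
\]
where the middle equality is Theorem~\ref{thm:Main2} and the outer equalities are the identity applied over $k$ and $L$ respectively.

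The identity itself is a descent-obstruction analogue of the classical fact that, for an Albanese embedding $\phi : X \to A$ with $\NS(X_{\bar k})$ torsion free, pullback induces an isomorphism $\phi^* : H^1(A_{\bar k}, G) \xrightarrow{\sim} H^1(X_{\bar k}, G)$ for every finite abelian $\bar k$-group scheme $G$. This follows from Kummer theory together with the identification $\Pic^0(A) = \Pic^0(X)$ coming from the universal property of the Albanese variety; the hypothesis that $\NS(X_{\bar k})$ is torsion free is precisely what is needed to rule out the extra contribution from $\NS(X_{\bar k})[n]$ to $H^1(X_{\bar k}, \mu_n)$. A standard Galois-descent argument at the level of torsors (and their twists) then shows that every $F$-torsor $f : Y \to X$ under a finite abelian $F$-group scheme is, up to twisting by a class pulled back from a torsor on $\Spec F$, of the form $\phi^*g$ for some torsor $g$ on $A_F$, and the surviving-set construction is compatible with twisting; this yields the displayed identity over any $F$.

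Applying this identity over both $k$ and $L$ and substituting into Theorem~\ref{thm:Main2} completes the proof. The main obstacle is the identity $X(\A_F)_\bullet^{\textup{f-ab}} = X(\A_F)_\bullet \cap A(\A_F)_\bullet^{\textup{f-ab}}$, which is essentially Stoll's observation for curves inside their Jacobians extended to the Albanese setting (and is already implicit in the formulation of Conjecture~\ref{conj1}); verifying it carefully requires keeping track of twists and of the mild compatibility needed to ensure that the identity holds at the level of connected components of adelic points, uniformly over $F \in \{k,L\}$.
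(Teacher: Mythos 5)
Your proof takes essentially the same approach as the paper: both reduce the corollary to Theorem~\ref{thm:Main2} via the identity $X(\A_F)_\bullet^{\textup{f-ab}} = X(\A_F)_\bullet \cap A(\A_F)_\bullet^{\textup{f-ab}}$, which holds because torsion-freeness of $\NS(\Xbar)$ (together with the automatic torsion-freeness of $\NS(\overline{A})$) forces the geometrically abelian fundamental group of $X$ to coincide with that of $A$, so pullback along the Albanese map identifies the relevant families of finite abelian torsors. The paper outsources the verification of this identity to \cite[Remark 6.5]{Stoll}, whereas you sketch it directly via Kummer theory and Galois descent of torsors; the substance is the same.
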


\begin{rem}
In the setting of Theorem~\ref{thm:Main2} and its corollary the sets $A(\A_k)^{\textup{f-ab}}$ and $X(\A_k)^{\textup{f-ab}}$ are equal to the analogous sets obtained by intersecting only the obstruction sets coming from {\it geometrically irreducible} torsors under finite abelian group schemes. This follows from \cite[Lemma 5.7(2)]{Stoll}, since the multiplication by $n$ maps $n : A \to A$ (and their pullback to $X$) define a cofinal family of coverings. Thus, for $X$ as in the corollary, we see that the failure of $L \mapsto X(\A_L)_\bullet^f$ to define an \'etale sheaf as asserted in Theorem~\ref{thm:MainThm2} goes away in the limit.
\end{rem}

\begin{rem}
	Theorem~\ref{thm:Main2} and its corollary do not hold if one considers the space of adelic points instead of its space of connected components. For example, consider an elliptic curve $X/\Q$ with trivial Mordell-Weil group and trivial Tate-Shafarevich group. Then $X(\A_\Q)^{\textup{f-ab}}$ is the set of adelic points $(x_p) \in \prod X(\Q_p)$ such that $x_p$ lies in the connected component of the identity in $X(\Q_p)$ for all $p \le \infty$ by \cite[Corollary 6.2]{Stoll}. Any $(x_p) \in X(\A_\Q)$ such that $x_p$ is the identity of $X(k_v)$ for the nonarchimedean primes will lie in $X(\A_L)^{\textup{f-ab}}$, for any totally imaginary extension $L/\Q$. If $X(\mathbb{R})$ has two connected components, then $X(\A_\Q)^{\textup{f-ab}}$ will be a proper subset of $(X(\A_L)^\textup{f-ab})^{\Gal(L/\Q)}$.
\end{rem}

In Section~\ref{sec:BM} we verify that the other terms in~\eqref{diagram1} define \'etale sheaves in a number of cases, including the case when $X$ is a curve embedded in its Jacobian (See Theorem~\ref{thm:Curves}). We also show that, in general, none of the sets in the top row of~\eqref{diagram1} define \'etale sheaves if we consider varieties that do not embed into an abelian variety (See Propositions~\ref{BrNotSheaf} and~\ref{cor:enriques}). These examples complement various recent results studying the behaviour of obstructions under base extension \cite{CL,LiangHu,RV,Wu}.

Part of our motivation for studying these questions comes from the desire to extend the results of \cite{CVBM} concerning the Brauer-Manin obstruction for constant curves over global function fields to the case of isotrivial curves. Recall that a variety $X$ over the function field $k = \F(D)$ of a smooth projective curve $D$ over a finite field $\F$ is called constant if there is a variety $X_0/\F$ such that $X \simeq X_0 \times_\F k$. A variety $X/k$ is called isotrivial if there is a finite extension $L/k$ such that $X \times_k L$ is constant. We note that, unlike the nonisotrivial case, it is not immediate that $\overline{X(k)}$ is equal to the subset of $\overline{X(L)}$ fixed by Galois, as there are isotrivial curves of every genus with $X(k) \ne \overline{X(k)}$.

 As an application of our results we generalise \cite[Theorems 1.1 and 1.2]{CVBM} to the case of isotrivial curves. See Theorems~\ref{thm:Frobdescent} and~\ref{thm:MW} below. We also deduce the following.

\begin{thm}
	Let $X$ be a smooth projective and isotrivial curve over a global function field $k$. Suppose $X$ becomes constant after base change to $L/k$. Then $\overline{X(k)} = X(\A_k)^{\Br}$ if and only if $\overline{X(L)} = X(\A_L)^{\Br}$. Moreover, Conjecture~\ref{conj1} holds for all isotrivial curves over global function fields if it holds for all constant curves over global function fields.
\end{thm}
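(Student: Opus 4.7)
The plan is to show that each side of the biconditional is equivalent to a single condition on the constant model $X_0/\F$ of $X_L$, hence to each other. Without loss of generality I replace $L$ with its Galois closure over $k$: this is a finite extension over which $X$ still becomes constant (if $X_L \cong (X_0)_L$ then $X_{L'} \cong (X_0)_{L'}$ for any $L \subseteq L'$). Set $G = \Gal(L/k)$.

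The first key ingredient is the Brauer--Manin sheaf property for curves embedded in their Jacobians, established as Theorem~\ref{thm:Curves} in the present paper, which gives
\[
X(\A_k)^{\Br} = (X(\A_L)^{\Br})^{G}.
\]
On the $L$-side, since $X_L \cong X_0 \times_\F L$ is constant, the statement $\overline{X(L)} = X(\A_L)^{\Br}$ is exactly \cite[Theorem~1.1]{CVBM} applied to $X_0$ over $L$, and is therefore equivalent to a concrete condition on $X_0/\F$ and $L$ (expressed in terms of Mordell--Weil and Frobenius descent data on $J_0$). On the $k$-side, by our extension Theorem~\ref{thm:MW} of CVBM to isotrivial curves, the statement $\overline{X(k)} = X(\A_k)^{\Br}$ is equivalent to the very same condition on $X_0/\F$ and $L$. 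The biconditional follows immediately.

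For the second assertion, I suppose Conjecture~\ref{conj1} holds for all constant curves over global function fields and deduce it for an arbitrary isotrivial curve $X/k$ becoming constant over $L$. Applying the conjecture to the constant curve $X_L$, embedded in its Jacobian $J_L = J_0 \times_\F L$, yields the full chain of equalities in \eqref{diagram1} over $L$. The first assertion of this theorem transfers the equality $\overline{X(L)} = X(\A_L)^{\Br}$ down to $\overline{X(k)} = X(\A_k)^{\Br}$. The remaining equalities in \eqref{diagram1} over $k$ follow by taking $G$-invariants, using Theorem~\ref{thm:Main2} (sheaf property for $X(\A_k)_\bullet \cap A(\A_k)_\bullet^{\textup{f-ab}}$) together with Corollary~\ref{cor:Main2} (sheaf property for $X(\A_k)_\bullet^{\textup{f-ab}}$ for an Albanese-embedded curve, whose N\'eron--Severi group is torsion free).

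The main obstacle lies in Theorem~\ref{thm:MW}: one needs to show that for an isotrivial curve, the condition $\overline{X(k)} = X(\A_k)^{\Br}$ reduces to precisely the condition on the constant model $X_0$ that \cite[Theorem~1.1]{CVBM} extracts over the constant base $L$. Granting this reduction together with the sheaf-theoretic results already referenced, the present theorem is essentially formal.
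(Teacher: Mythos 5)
The easy direction of the biconditional, $\overline{X(L)} = X(\A_L)^{\Br} \Rightarrow \overline{X(k)} = X(\A_k)^{\Br}$, does follow directly from the sheaf statements you cite: Theorem~\ref{thm:Curves} (which relies on Lemma~\ref{lem:CbarIsotrivial}) shows both $M \mapsto \overline{X(M)}$ and $M \mapsto X(\A_M)^{\Br}$ are \'etale sheaves, so equality over $L$ descends to the $\Gal(L/k)$-invariants, i.e.\ to $k$. This is the direction needed for the second assertion, and your deduction of Conjecture~\ref{conj1} over $k$ from its validity over $L$ by taking Galois invariants across all of~\eqref{diagram1} via Theorem~\ref{thm:Curves} is correct, with the small addendum that the case where $X$ has no $k$-rational divisor of degree $1$ must be treated separately via Remark~\ref{rem:3}.

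The converse direction is where your argument has a genuine gap, and it is not filled by the references you give. First, a misattribution: \cite[Theorem~1.1]{CVBM} does not assert $\overline{X(L)} = X(\A_L)^{\Br}$; it gives the identity $X(\A_L)^{\Br} = X(L) \cup X^{\textup{MW-sieve}}(L)$. Using it and Theorem~\ref{thm:MW}, the two sides of the biconditional are equivalent to the containments $X(\A_{L,\F_L}) \cap \overline{J(L)} \subseteq \overline{X(L)}$ and $X(\A_{k,\F}) \cap \overline{J(k)} \subseteq \overline{X(k)}$ respectively. You then assert that these are ``the very same condition on $X_0/\F$ and $L$,'' but nothing cited justifies this: the $k$-side containment a priori depends on the twist $X/k$, not merely on the constant model $X_0/\F_L$ and $L$. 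Taking $\Gal(L/k)$-invariants converts the $L$-side containment into the $k$-side one, but the converse implication — passing from equality of $G$-invariants back to equality of the full sets over $L$ — is exactly the nontrivial step, and Theorem~\ref{thm:MW} does not provide the reduction you invoke. What is missing is an argument that if $X^{\textup{MW-sieve}}(L)$ contained a point outside $\overline{X(L)}$ then the failure would already be detected by $G$-fixed points; this requires a genuine analysis (compare the retraction and de~Franchis argument in the proof of Lemma~\ref{lem:CbarIsotrivial}), which your proposal acknowledges but does not supply.
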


Note that we have $X(\A_k) = X(\A_k)_\bullet$ in the function field case. 
Some instances where the equality $\overline{X(L)} = X(\A_L)^{\Br}$ is known to hold for a constant curve over $L$ are given in \cite[Theorem 2.14]{CVV} and \cite[Theorem 1.5]{CVBM}.

\subsection*{Acknowledgements} The authors were supported by the Marsden Fund administered by the Royal Society of New Zealand. They thank the referees for their careful reading and helpful comments.

\section{Descent sets for torsors under finite abelian group schemes}

In this section we will prove Theorem~\ref{thm:MainThm2}.

\subsection{The nontrivial \'etale subgroup scheme case}\label{etale}
For a finite abelian group scheme $G$ over $k$, the base change $G_{\ksep} = G \times_{\Spec(k)}\Spec(\ksep)$ contains a unique maximal \'etale subgroup scheme $G_e$. It is determined by the property that $G_e(\ksep) = G(\ksep)$ (see \cite[Proposition 1.31]{MilneAG}). If $k$ is a number field, then $G_e = G$. If $k$ is a global function field and the characteristic of $k$ divides the order of $G$, then $G_e$ may be a proper subgroup scheme of $G_{\ksep}$. In any case $G_e$ will be defined over a finite separable extension of $k$. In the results that follow we are free to pass to a finite separable extension, so we can assume $G_e$ is defined over $k$. 

The map $G \to G/G_e$ is \'etale, so any separable point of the quotient will lift to a separable point of $G$. It follows that $(G/G_e)(\ksep) = 0$. The connected-\'etale sequence~\cite[Prop. 13.4]{MilneAG} for $G/G_e$ is an exact sequence $0 \to G_c \to G/G_e \to H_e \to 0$, where $G_c$ connected (it is the connected component of the identity in $G/G_e$) and $H_e$ \'etale. 

\begin{rem} The \'etale quotient $H_e$ may be nontrivial. For example, suppose $G$ is a nontrivial extension $1 \to \mu_p \to G \to \Z/p\Z \to 0$ where $p$ is the characteristic of $k$ (such group schemes arise as the kernel of multiplication by $p$ on an elliptic curve with $j$-invariant in $k \setminus k^p$). Then, in the notation above, we have $G_e = 0$, $G/G_e = G$, $G_c = \mu_p$ and $H_e = \Z/p\Z$.
\end{rem}

\begin{thm}\label{thm:etale}
	Let $G$ be a finite abelian group scheme over a global field $k$ such that $G(\ksep) \ne 0$. Let $X$ be a geometrically integral variety over $k$ and let $f:Y \to X$ be a geometrically irreducible torsor under $G$. Then there exists a finite separable extension $K/k$ and a finite Galois extension $L/K$ such that
	$$X(\A_K)^f \ne (X(\A_L)^f)^{\Gal(L/K)}\,.$$
	Moreover, for any $N \ge 1$, the extensions $L/K/k$ can be chosen so that $(X(\A_L)^f)^{\Gal(L/K)} \subset X(\A_K)$ contains a point which differs from all points in $X(\A_K)^f$ at at least $N$ nonarchimedean places. In particular, the assignment $L \mapsto X(\A_L)_\bullet^f$ does not define an \'etale sheaf.
\end{thm}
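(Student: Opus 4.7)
The plan is to produce the desired $x' \in (X(\A_L)^f)^{\Gal(L/K)}$ by perturbing an existing point of $X(\A_K)^f$ at $N$ cleverly chosen nonarchimedean places. After replacing $k$ by a finite separable extension, I first arrange that (i) $G$ contains a constant \'etale subgroup scheme $\langle \tau \rangle \cong \Z/p\Z$ for some prime $p$ --- possible because $G(\ksep)\neq 0$ yields a nontrivial separable point, which one may take of prime order after passing to a power and a further finite extension --- and (ii) $Y(\A_K) \neq \emptyset$, which follows from Lang--Weil applied to the geometrically irreducible $Y$ at its good-reduction places after enlarging $K$ further. Conditions (i)--(ii) together give $X(\A_K)^f \supset f(Y(\A_K)) \neq \emptyset$, so I fix $x \in X(\A_K)^f$ with an associated global class $\beta \in H^1(K,G)$.

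Next, by class field theory, I pick a cyclic degree-$p$ extension $L/K$ and an isomorphism $\chi: \Gal(L/K) \xrightarrow{\sim} \langle \tau \rangle$; inflation produces a nontrivial class $[\chi] \in H^1(L/K, \langle\tau\rangle) \hookrightarrow H^1(K, G)$ that becomes trivial in $H^1(L, G)$. By Chebotarev density, I select $N$ nonarchimedean places $v_1, \dots, v_N$ of $K$ at which Frobenius generates $\Gal(L/K)$ (so $[\chi]|_{v_i} \neq 0$), at which $Y$ and all relevant twists have good reduction, and whose residue fields are large enough that Lang--Weil supplies $K_{v_i}$-points on every smooth geometrically integral variety of the relevant dimension. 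Applying the latter to the twisted torsor $Y^{\beta|_{v_i} + [\chi]|_{v_i}} \to X$ yields $x'_{v_i} \in X(K_{v_i})$ with $x'^*_{v_i} f = \beta|_{v_i} + [\chi]|_{v_i}$; since evaluation is locally constant and $[\chi]|_{v_i} \neq 0$, the point $x'_{v_i}$ lies in a different connected component of $X(K_{v_i})$ from $x_{v_i}$. I define $x' \in X(\A_K)$ to equal $x'_{v_i}$ at each $v_i \in S := \{v_1,\dots,v_N\}$ and to agree with $x$ off $S$.

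That $x' \in (X(\A_L)^f)^{\Gal(L/K)}$ is formal: $x' \in X(\A_K)$ is automatically Galois-fixed in $X(\A_L)$, and at every place $w \mid v$ of $L$ the class $\res_{L_w/K_v}(x'^*_v f)$ equals $\res(\beta)|_w$ --- trivially for $v \notin S$, and at $v_i$ because $[\chi]|_{v_i}$ dies under $\res_{L_{w_i}/K_{v_i}}$. Hence $(x'^*_w f)_w$ is the image of the single global class $\res(\beta) \in H^1(L, G)$.

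The hard step is showing that for every $y \in X(\A_K)^f$ with associated $\beta_y$, the set $D := \{v \text{ nonarchimedean} : y_v \neq x'_v\}$ satisfies $|D| \geq N$. Assuming $|D| < N$, the difference $\delta := \beta_y - \beta \in H^1(K, G)$ must vanish locally at every $v \notin S \cup D$ and equal $[\chi]|_{v_i}$ at each $v_i \in S \setminus D$, which is nonempty. Projecting $\delta$ to $H^1(K, G/\langle\tau\rangle)$ kills $[\chi]|_{v_i}$, so its image is locally trivial at a density-$1$ set of places; Chebotarev density applied to the Galois module governing $G/\langle\tau\rangle$ forces that image to vanish, whence $\delta$ lifts to a class in $H^1(K, \langle\tau\rangle) = H^1(K, \Z/p\Z)$. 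A second Chebotarev application on this cyclic-extension module then forces the lift to be trivial, contradicting $[\chi]|_{v_i} \neq 0$. The delicate point --- which I would handle by passing through a Jordan--H\"older filtration of $G$ whose composition factors are either \'etale cyclic or have no separable points --- is that the inflation $H^1(K, \langle\tau\rangle) \to H^1(K, G)$ can fail to be injective, and that the Chebotarev principle ``locally trivial at cofinite places implies globally trivial'' for non-\'etale factors of $G/\langle\tau\rangle$ (such as $\mu_p$ in characteristic $p$) requires separate verification.
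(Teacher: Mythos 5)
The central gap is in your ``hard step.'' Assuming $|D| < N$, you argue via Chebotarev that the global class $\delta$ (or at least its image in $H^1(K, G/\langle\tau\rangle)$) must vanish because it is locally trivial at a density-one set of places. But the principle ``locally trivial at cofinitely many places implies globally trivial'' is false for general finite abelian group schemes --- and you acknowledge this but never close the gap. There exist nonzero \emph{finitely supported} classes: the classical Grunwald--Wang example, namely the class of $16$ in $\Q^\times/(\Q^\times)^8 = H^1(\Q,\mu_8)$, is trivial in $\Q_p$ for every odd prime $p$ and in $\mathbb{R}$, but nontrivial in $\Q_2$; similar phenomena occur for constant cyclic groups of composite order and for various group schemes in positive characteristic. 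Your Jordan--H\"older suggestion does not obviously repair this, since the composition factors of $G/\langle\tau\rangle$ may be \'etale of prime order with nontrivial Galois action, for which the density argument still requires a careful inductive treatment together with control of the non-injectivity you flag for the connecting maps.

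The paper's proof of the \'etale case (Lemma~\ref{lem:notasheaf}) avoids this trap entirely. Rather than asserting that there are no nontrivial finitely supported classes, it observes that there are only \emph{finitely many}: by \cite[I.9.3]{MilneADT} they all lie in the image of inflation from the finite splitting field of $G$. Hence there is a finite bound $M$ on the number of places at which any such class can be nonzero. One then modifies the base adelic point at $M+N$ places rather than at $N$; even in the worst case where a finitely supported global class agrees with the modified evaluation at as many as $M$ of those places, the modified point still differs from every point of $X(\A_K)^f$ at $\geq N$ places. Modifying at only $N$ places, as you do, cannot give the conclusion.

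A second gap concerns smoothness and reducedness. You invoke Lang--Weil to produce $K_{v_i}$-points on twists of $Y$, treating them as smooth geometrically integral varieties. But if $G$ has a nontrivial connected subgroup scheme then $Y$ (and every twist of $Y$) is non-reduced, so Lang--Weil plus Hensel lifting does not apply directly. The paper handles this by factoring $f$ as $Y \to Z \to X$ through the maximal \'etale subgroup $G_e$, passing to a smooth open of $Z_{\textup{red}}$ (Lemmas~\ref{lem:Z} and~\ref{lem:redet}), running the \'etale-case argument there, and then using the Hasse principle $\Sha^1(k,G/G_e)=0$ from \cite{GA-T} together with separability of $k_v$-points to transfer the conclusion back to $f$ itself. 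Your passage to a constant $\Z/p\Z \subset G$ after a finite base extension is in the right spirit, but moving from a statement about a subquotient of $G$ back to a statement about $G$ requires precisely these extra ingredients, which your proposal does not supply.
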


Before proving this theorem we will prove three lemmas. The first is sufficient to prove the theorem in the case that $G$ is \'etale. We note that for \'etale $G$, the fppf cohomology group $H^1(k,G)$ agrees with~\'etale and Galois cohomology.

\begin{lemma}\label{lem:notasheaf}
Let $X$ be geometrically integral variety over a global field $k$ and let $f:Y \to X$ be a geometrically irreducible torsor under a finite \'etale abelian group scheme $G/k$ with $G \ne 0$. Suppose $Y(\A_k) \ne \emptyset$. For any $N \ge 1$, there is a finite Galois extension $L/k$ and a point in $X(\A_k) \cap X(\mathbb{A}_L)^f$ which differs from every point in $X(\A_k)^f$ at at least $N$ nonarchimedean places.
\end{lemma}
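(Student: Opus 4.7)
Fix an adelic lift $(y_v) \in Y(\A_k)$ and let $(x_v) \in X(\A_k)^f$ be its image, so that $x_v^*f = 0$ in $H^1(k_v,G)$ for every $v$. The plan is to modify $(x_v)$ at many nonarchimedean places to produce a point $(x'_v)$ which still lies in $X(\A_L)^f$ for a suitable Galois extension $L/k$, but which cannot be approximated by any point of $X(\A_k)^f$ up to fewer than $N$ nonarchimedean places.

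I would begin by choosing a nontrivial class $\beta \in H^1(k,G)$ together with a finite Galois extension $L/k$ that kills it. Such a $\beta$ exists because $G$ is a nontrivial finite \'etale abelian group scheme over a global field, so $H^1(k,G)$ contains nonzero elements; and any single such class is trivialised by a finite Galois extension (the splitting field of its representing torsor). By functoriality of cohomology, the restriction of $\beta$ to $H^1(L_w, G)$ then vanishes for every place $w$ of $L$. The set $T_\beta := \{v : \beta_v \neq 0\}$ has positive Dirichlet density by Chebotarev, hence is infinite. Since $Y$ is geometrically irreducible, so is the twisted torsor $Y^\beta$; by Lang--Weil and Hensel's lemma, $Y^\beta(k_v) \neq \emptyset$ for almost all nonarchimedean $v$, and at each such $v$ the fibre of the evaluation map $X(k_v) \to H^1(k_v, G)$ over $\beta_v$ is nonempty. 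Choose $M \geq N$ distinct nonarchimedean places $v_1, \ldots, v_M \in T_\beta$ at which both conditions hold; for each $i$, pick $x'_{v_i} \in X(k_{v_i})$ with $x'^*_{v_i} f = \beta_{v_i}$; and set $(x'_v)$ equal to $x'_{v_i}$ at $v = v_i$ and to $x_v$ at every other place.

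To verify $(x'_v) \in X(\A_L)^f$, observe that every local class $x'^*_v f$ is either $0$ or $\beta_v$, and both restrict to zero in $H^1(L_w, G)$ for every $w \mid v$; hence the restricted family is realised by $0 \in H^1(L, G)$. For the distance claim, suppose some $(x''_v) \in X(\A_k)^f$ with associated global class $\gamma \in H^1(k, G)$ agrees with $(x'_v)$ at all but fewer than $N$ places, so that $\gamma_v = x'^*_v f$ wherever the two points coincide. If $\gamma = 0$, then $\gamma_{v_i} = 0 \neq \beta_{v_i} = x'^*_{v_i} f$ at each $v_i$, so $x''_{v_i} \neq x'_{v_i}$ at all $M \geq N$ of them, a contradiction. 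If $\gamma \neq 0$, then Chebotarev shows $T_\gamma$ is infinite, hence $T_\gamma \setminus \{v_1, \ldots, v_M\}$ is infinite; at each $v$ in this set we have $\gamma_v \neq 0 = x'^*_v f$, forcing infinitely many disagreements and again a contradiction.

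The main step requiring care is the coordination of three inputs: the existence of a nontrivial $\beta$ killed by a finite Galois extension $L/k$; the positive density (and hence infinitude) of both $T_\beta$ and $T_\gamma$ for every nonzero global class $\gamma$; and the nonemptiness of $Y^\beta(k_v)$ at infinitely many nonarchimedean places of $T_\beta$. These are handled respectively by standard Galois cohomology for global fields, the Chebotarev density theorem applied to the \'etale cover that detects the relevant cohomology class, and the Lang--Weil estimate together with Hensel's lemma applied to the geometrically irreducible twist $Y^\beta$.
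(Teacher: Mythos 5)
Your overall architecture (perturb an adelic point at several places using a global class $\beta$, then argue by counting) is the same as the paper's, and your use of a single global class $\beta$ whose local restrictions are all killed by the one extension $L$ is a genuine simplification: it avoids the appeal to Krull's existence theorem that the paper needs in order to kill an unrelated collection of local classes. However, there is a real gap in the counting step.

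The gap is your twice-invoked claim that for any nonzero class $\gamma\in H^1(k,G)$ (and in particular for $\beta$), the support $T_\gamma=\{v:\gamma_v\neq 0\}$ is infinite ``by Chebotarev.'' This is false when $G$ has nontrivial Galois action. A nonzero class may be inflated from $H^1(K/k,G(K))$ for $K$ the splitting field of $G$; such classes can be supported on finitely many places, and indeed $\Sha^1(k,G)$, whose elements have \emph{empty} support, may be nonzero. Chebotarev only shows $T_\gamma$ has positive density when the restriction of $\gamma$ to $\Gal(K)$ is nonzero, which you have not arranged. This gap is exactly what the paper's notion of ``finitely supported'' classes is designed to handle: it cites Milne to conclude that finitely supported classes lie in the image of inflation from $H^1(K/k,G)$, hence are finite in number, with a uniform bound $M$ on the size of their supports, and then uses $M+N$ places rather than your $M\geq N$. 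Concretely, your argument fails if there is a nonzero finitely supported $\gamma$ with $\gamma_{v_i}=\beta_{v_i}$ at more than $M-N$ of your chosen $v_i$: then the corresponding adelic point differs from $(x'_v)$ at fewer than $N$ places. To repair the proof you must (i) first show $H^1(k,G)$ is infinite while only finitely many finitely supported classes exist, so you can choose $\beta$ non-finitely-supported and be sure $T_\beta$ is infinite, and (ii) take the number of perturbed places to be $N$ plus the maximum support size of a finitely supported class, not merely $N$.
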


\begin{proof}

Call a class $\xi \in H^1(k,G)$ finitely supported if the image of $\xi$ under the restriction map $H^1(k,G) \to  H^1(k_v,G)$ is trivial for all but finitely many places $v$ of $k$. Let $K/k$ be the splitting field of $G$, i.e., the minimal Galois extension $K/k$ such that $G(K) = G(\kbar)$. By \cite[I.9.3]{MilneADT} any finitely supported class lies in the image of the inflation map $H^1(K/k,G) \to H^1(k,G)$. It follows that $H^1(k,G)$ contains only finitely many finitely supported classes. As we explain in the next paragraph $H^1(k,G)$ is infinite, so there are infinitely many classes that are not finitely supported.

We now prove that $H^1(k,G)$ is infinite. For any place $v$ which splits completely in $K$, we have $H^1(k_v,G) = \Hom(\Gal(k_v),G(\ksep))$, which is nontrivial by local class field theory and the assumption that $G \ne 0$. Moreover, for any finite set $T$ of such places $v$, the product of the restriction maps $H^1(k,G) \to \prod_{v\in T}H^1(k_v,G)$ is surjective by \cite[Theorem 9.2.3(vii)]{CoN}. As there are infinitely many places of $k$ which split completely in $K$, it follows that $H^1(k,G)$ is infinite.

We now claim that there are infinitely many places $v$ of $k$ such that the map $f:Y(k_v) \to X(k_v)$ is not surjective. To see this, let $\xi \in H^1(k,G)$ be a class which is not finitely supported and let $f^\xi : Y^\xi \to X$ be the corresponding twist. Then $Y^\xi$ is geometrically irreducible (being a twist of $Y$, which is assumed to be geometrically irreducible) and $Y$ is geometrically reduced because $X$ is geometrically reduced and $G$ is \'etale. Then $Y^\xi(k_v)$ is nonempty for all but finitely many places $v$, as a consequence of the Lang-Weil estimates, Hensel's Lemma and the fact that the smooth locus of $Y^\xi$ is nonempty (See \cite[Theorem 7.7.2]{PoonenBook}). A point $x \in f^\xi(Y^{\xi}(k_v)) \subset X(k_v)$ lies in $f(Y(k_v))$ if and only if $\operatorname{res}_v(\xi) = 0$. Since $\xi$ is not finitely supported, there are infinitely many places where $f^\xi(Y^\xi(k_v))$ is a nonempty subset of $X(k_v)$ disjoint from $f(Y(k_v))$.

Let $M$ be the maximum number of places at which a finitely supported class in $H^1(k,G)$ is nonzero and let $N \ge 1$. Let $w_1,\dots,w_{M+N}$ be $M+N$ nonarchimedean places of $k$ such that $f:Y(k_{w_i}) \to X(k_{w_i})$ is not surjective. Let $(x_v) \in Y(\mathbb{A}_k)$ and choose $y_{w_i} \in X(k_{w_i}) \setminus f(Y(k_{w_i}))$ for $i = 1,\dots,M+N$. Define $(z_v) \in X(\mathbb{A}_k)$ by setting $z_v = f(x_v) \in X(k_v)$ for $v \notin \{w_1,\dots,w_{M+N}\}$ and setting $z_{w_i} = y_{w_i}$ for $i = 1,\dots,M+N$.

For all places $v \not\in \{w_1,\dots,w_{M+N}\}$ we have that $z_v^*f = f(x_v)^*f = 0 \in H^1(k_v, G)$. It follows that $(z_v)^*f$ is nonzero precisely at the $M+N$ places $w_1,\dots,w_{M+N}$. By definition of the integer $M$ we have that, for any global class $\xi \in H^1(k,G)$, the difference $z_v^*f - \xi_v$ is nonzero at at least $N$ nonarchimedean places. It follows that $(z_v) \in X(\A_k)$ differs from every element of $X(\A_k)^f$ at at least $N$ nonarchimedean places.

Since $G$ is \'etale we have that for each $i = 1,\dots,M+N$, there is a finite Galois extension $L^{w_i}/k_{w_i}$ that kills $z_{w_i}^*f$. Moreover, we can find a global Galois extension $L/k$ such that, for all $i = 0,\dots,M+N$ and all primes $u | w_i$ we have that the extension $L_{u}/k_{w_i}$ kills $z_{w_i}^*f$.
Indeed, it is enough to construct $L/k$ such that for all $i = 1,\dots,M+N$ and all $u | w_i$ we have $L_{u} \supset L^{w_i}$ and the main result of \cite{Krull} gives such an extension.

Consider $(z_v) \in X(\A_k) = X(\A_L)^{\Gal(L/k)}$, and write $z_w \in X(L_w)$ for the component of $(z_v)$. If $w$ is a place of $L$ lying over a place $v$ of $k$, we have $z_w^*(f)$ is the image of $z_v^*(f)$ under the map $H^1(k_v, G) \to H^1(L_w, G)$. By construction of $L$, we have $z_w^*(f)=0$ for all places $w$ of $L$ so $(z_w) \in X(\A_L)^f \cap X(\A_k)$ as required.
\end{proof}

\begin{lemma}\label{lem:Z}
	Let $X$ be a variety over $k$ and let $g : Z \to X$ be a torsor under a finite abelian group scheme $G$ over $k$. Assume that $X$ is smooth over $k$ and $Z(k) \ne \emptyset$. Then the reduced subscheme $Z_\textup{red} \subset Z$ contains a nonempty open subscheme that is smooth over $k$.
\end{lemma}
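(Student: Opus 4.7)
The plan is to use the connected-\'etale decomposition of $G$ to reduce to the case that $G$ is finite connected, and then to exploit the $k$-point $z$ together with smoothness of $X$ to produce an open of $Z_\textup{red}$ which is isomorphic to an open of $X$ and hence smooth over $k$.

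For the reduction, consider the connected-\'etale sequence $0 \to G^\circ \to G \to \pi_0(G) \to 0$, where $G^\circ$ is the identity component and $\pi_0(G)$ is \'etale. The quotient $Z' := Z/G^\circ \to X$ is a $\pi_0(G)$-torsor and therefore an \'etale cover of $X$; since $X$ is smooth, $Z'$ is smooth over $k$, and in particular reduced. The morphism $Z \to Z'$ is a $G^\circ$-torsor admitting the $k$-rational image $z'$ of $z$. Replacing $X$ by $Z'$ and $G$ by $G^\circ$, I may assume $G$ is finite and connected. Then $G = \Spec R$ with $R$ local Artinian, and because the identity of $G$ is a $k$-point one has $R/\mathfrak{m}_R = k$, so $G_\textup{red} = \Spec k$.

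In the connected case my aim is to construct a section $s : U \to Z$ of $g$ on an \'etale neighborhood $U$ of $x := g(z)$ with $s(x) = z$. Fppf-locally on $X$ the torsor $Z$ becomes isomorphic to $G \times_k X$, so fppf-locally $Z_\textup{red}$ identifies with $G_\textup{red} \times_k X = X$ via $g$, and the $k$-point $z$ singles out which of these trivializations to match. I would upgrade this to an actual \'etale-local section by lifting $z$ successively across the finite nilpotent filtration $R \supset \mathfrak{m}_R \supset \mathfrak{m}_R^2 \supset \cdots \supset 0$: smoothness of $X$ at $x$ makes $\widehat{\mathcal{O}}_{X,x}$ a power series ring over $k$, which is what one needs to resolve the obstruction to each successive extension, and Artin approximation then descends the resulting formal section to a genuine section on some \'etale neighborhood $U$ of $x$. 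Since $U$ is reduced, this section factors through $Z_\textup{red}$, and its image is an open subscheme of $Z_\textup{red}$ that is \'etale, hence smooth, over $U \subset X$.

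The subtlest step I expect is producing the section near $z$: although $Z$ is fppf-locally trivial, formation of $Z_\textup{red}$ does not commute with fppf base change, so the fppf-local identification of $Z_\textup{red}$ with $X$ does not directly yield a Zariski section of $g$. The role of the hypothesis $Z(k) \ne \emptyset$ is precisely to provide the base case $z$ of the inductive lift, and smoothness of $X$ at $g(z)$ is essential both for the vanishing of the obstructions at each step and for passing from a formal section to an \'etale one via Artin approximation.
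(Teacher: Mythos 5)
Your reduction to $G$ finite connected via the connected--\'etale sequence matches the paper's and is fine, and you correctly identify the production of an \'etale-local section $s\colon U \to Z$ of $g$ near $x = g(z)$ as the crux. The difficulty is that this section need not exist: a torsor under an infinitesimal group scheme need not trivialise over any \'etale, henselian, or even formal neighbourhood of $x$, even when it carries a $k$-point above $x$. The inductive lift along the filtration of $R$ does not help, because the obstructions to lifting a section of $g$ across infinitesimal thickenings of $\Spec(k)$ in $X$ are governed by the failure of $g$ itself to be (formally) smooth, not by the smoothness of $X$ over $k$; since $g$ is finite flat of degree $|G|>1$ these obstructions are genuinely nonzero, and Artin approximation has nothing to approximate. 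Concretely, take $k = \F_p(s)$ with $p$ odd, $X = \A^1_k = \Spec k[t]$, $G = \alpha_p$, and $g\colon Z := \Spec k[t,u]/(u^p - st^p) \to X$ the evident $\alpha_p$-torsor with $z := (0,0) \in Z(k)$. The class $st^p$ is not a $p$-th power in $k[[t]]$ (its coefficient $s$ is not in $k^p$), so $g$ has no section over $\Spec\widehat{\mathcal{O}}_{X,0}$ and hence none over any \'etale neighbourhood of $0$.

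The same example shows that the conclusion you (and the paper's own proof, which identifies the local ring of $Z$ at $z$ with that of $G$ at the identity) aim for --- smoothness of $Z_\textup{red}$ at $z$ --- is simply false, and in fact the lemma's stated conclusion fails here. Indeed $u^p - st^p$ is irreducible over $k(t)$ because $st^p \notin k(t)^p$, so $Z$ is integral and $Z_\textup{red} = Z$; yet $d(u^p - st^p) = 0$, so $\Omega_{Z/k}$ is free of rank $2$ while $\dim Z = 1$, and $Z$ is nowhere smooth over $k$. Equivalently $Z_{\kbar} = \Spec\kbar[t,u]/((u - s^{1/p}t)^p)$ is non-reduced, so $Z_\textup{red}$ is not geometrically reduced and has no smooth locus at all. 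So not only does your proposed local section fail to exist, but any proof strategy built on showing $Z_\textup{red}$ is smooth near $z$ --- yours or the paper's --- cannot work; the lemma appears to need a further hypothesis (for instance one forcing $Z$ to be geometrically reduced) before it is even true.
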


\begin{proof}
	By the connected-\'etale sequence \cite[Prop. 13.4]{MilneAG} the torsor $g$ factors as a composition of torsors $g_c : Z \to Z_e$ and $g_e:Z_e \to X$ under a connected group scheme and an \'etale group scheme respectively. Since $g_e$ is \'etale, we have that $Z_e$ is smooth. It therefore suffices to prove the lemma assuming that $G$ is connected. 

	Let $z \in Z(k)$. Then the fiber of $g$ above the smooth closed point $g(z) \in X(k)$ is isomorphic to the geometrically connected scheme $G$. So the local ring of $Z$ at $z$ is isomorphic to the local ring of $G$ at the identity and the spectrum of the local ring of $Z_\textup{red}$ at $z$ is the reduced subscheme of $G$. Since $G$ is finite and connected, $G_\textup{red}$ is the spectrum of a field containing $k$ as a subfield. Since $G_\textup{red}(k) = G(k) \ne \emptyset$, we must have $G_\textup{red} = \Spec(k)$, which is smooth. So $Z_\textup{red}$ is smooth at $z$. Since smooth is an open condition, this proves the lemma.	
\end{proof}

\begin{lemma}\label{lem:redet}
	Suppose $Y$ is a variety over $k$ and $f : Y \to Z$ is a torsor under a finite abelian \'etale group scheme $G$ over $k$. The pullback of $f$ along $Z_\textup{red} \to Z$ yields a torsor $f' : Y_\textup{red} \to Z_\textup{red}$ under $G$ and the canonical map $Z_{\textup{red}} \to Z$ gives bijections $Z_{\textup{red}}(\A_k) = Z(\A_k)$ and $Z_{\textup{red}}(\A_k)^{f'} = Z(\A_k)^f$.
\end{lemma}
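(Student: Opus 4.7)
The crux is the scheme-theoretic identification $Y \times_Z Z_\textup{red} = Y_\textup{red}$ as closed subschemes of $Y$. Once this is established, the torsor claim is immediate: base change of $f$ under the closed immersion $\iota : Z_\textup{red} \hookrightarrow Z$ yields a torsor under $G$ whose total space is exactly this fiber product, so it must be the morphism $f' : Y_\textup{red} \to Z_\textup{red}$. The two adelic point equalities will then follow from reducedness of the adele ring together with functoriality of pullback on fppf cohomology.

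To prove the equality of closed subschemes, I would argue by double inclusion, and this is the only nontrivial step in the proof, as well as the unique place where the \'etale hypothesis on $G$ is used. Since $G$ is \'etale, $f$ is \'etale, and hence so is the base change $Y \times_Z Z_\textup{red} \to Z_\textup{red}$. Because an \'etale scheme over a reduced base is reduced, the fiber product is a reduced closed subscheme of $Y$ and is therefore contained in $Y_\textup{red}$. Conversely, the composition $Y_\textup{red} \hookrightarrow Y \xrightarrow{f} Z$ factors uniquely through $\iota$ by the universal property of the reduced closed subscheme, and the universal property of the fiber product then produces a morphism $Y_\textup{red} \to Y \times_Z Z_\textup{red}$ of closed subschemes of $Y$, yielding the reverse inclusion.

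For the adelic statements, each completion $k_v$ is a field and, for nonarchimedean $v$, its ring of integers $\mathcal{O}_v$ is a reduced discrete valuation ring, so every $k$-morphism from $\Spec(k_v)$ or $\Spec(\mathcal{O}_v)$ to $Z$ factors uniquely through $\iota$. Assembling these place-by-place bijections into the restricted product gives $Z(\A_k) = Z_\textup{red}(\A_k)$. For the survival sets, if $(x_v) \in Z(\A_k)$ corresponds to $(x_v') \in Z_\textup{red}(\A_k)$ under this bijection, then $x_v = \iota \circ x_v'$ and functoriality of pullback yields
\[
x_v^* f \;=\; (x_v')^* \iota^* f \;=\; (x_v')^* f' \;\in\; H^1(k_v, G)
\]
for every place $v$. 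Hence the condition that the tuple of local classes lies in the diagonal image of $H^1(k, G)$ cuts out the same subset whether written as $Z(\A_k)^f$ or as $Z_\textup{red}(\A_k)^{f'}$, completing the proof.
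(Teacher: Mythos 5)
Your proof is correct and follows essentially the same route as the paper: establish the scheme-theoretic equality $Y \times_Z Z_\textup{red} = Y_\textup{red}$ by combining the universal property of the reduced subscheme with the fact that an \'etale morphism over a reduced base has reduced total space, and then deduce the adelic statements from reducedness of $\Spec(k_v)$ and $\Spec(\mathcal{O}_v)$. The only (minor) difference is in how the equality of survival sets is concluded: you argue directly via functoriality of pullback, noting $x_v^*f = (x_v')^*f'$ so that the tuples of local classes literally coincide, whereas the paper invokes the twisting description of $Z(\A_k)^f$ together with the observation that, for \'etale $G$, passing to reduced subschemes commutes with twisting; your version is slightly more direct, but both hinge on the same facts.
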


\begin{proof}
The pullback of $f$ along $Z_\textup{red} \to Z$ is a $G$-torsor $\tilde{Y} \to Z_\textup{red}$. Since $G$ is \'etale and $Z_\textup{red}$ is reduced, we have that $\tilde{Y}$ is reduced. Since $\tilde{Y}$ is reduced, the morphism $\tilde{Y} \to Y$ must factor through $Y_\textup{red}$ by \cite[Lemma 26.12.7]{StacksProject}. Moreover, the restriction of $f$ to $Y_\textup{red}$ gives a compatible map to $Z_\textup{red}$, so we must have $\tilde{Y} = Y_\textup{red}$ by the universal property of pullbacks.

Since $\Spec(k_v)$ is reduced (as a scheme over $\Spec(k)$) any morphism $\Spec(k_v) \to Z$ factors through $Z_\textup{red}$, so the obvious inclusion $Z_\textup{red}(\A_k) \to Z(\A_k)$ is a bijection. Similarly we have bijections $Y_\textup{red}^\xi(\A_k) = Y^\xi(\A_k)$ for any twist of $f : Y \to Z$. Since $G$ is \'etale, the formation of $f'$ from $f$ commutes with twisting, so 
we also get a bijection $Z_{\textup{red}}(\A_k)^{f'} = Z(\A_k)^f$.
\end{proof}

\begin{proof}[Proof of Theorem~\ref{thm:etale}]
Since $X$ is geometrically integral, its smooth locus is nonempty and open. Replacing $X$ with its smooth locus we can assume it is smooth over $k$. We are free to replace $k$ by a finite separable extension, so we may assume that $X(k) \ne \emptyset$. Then there exists a twist $f':Y' \to X$ of $Y \to X$ with $Y'(k) \ne \emptyset$. Since every twist of $f'$ is also a twist of $f$ (and vice versa) we have that $X(\A_k)^{f'} = X(\A_k)^f$ (cf. \cite[Lemma 5.3(5)]{Stoll}). The same is true over any finite separable extension $L/k$, so replacing $Y$ by this twist if necessary we may assume $Y(k) \ne \emptyset$.

Passing to a finite separable extension we can also assume $G(k) = G(\ksep)$. Then the maximal \'etale subgroup scheme $G_e \subset G$ is defined over $k$ by \cite[Prop 1.31]{MilneAG}. As described at the beginning of this section we have an exact sequence $0 \to G_e \to G \to G/G_e \to 0$. Consequently, $f$ factors as a composition of torsors $f_e : Y \to Z$ and $g : Z \to X$ under $G_e$ and $G/G_e$, respectively. 

By Lemma~\ref{lem:Z} there is a smooth open subscheme $U \subset Z_\textup{red}$. By Lemma~\ref{lem:redet} the pullback of $f_e$ along $U \to Z_\textup{red} \to Z$ is a $G_e$-torsor $\tilde{f}_e : V \to U$, where $V$ is a nonempty open subscheme of $Y_\textup{red}$. As $Y$ is assumed to be geometrically irreducible, we have that $V$, $Z_\textup{red}$ and $U$ are also geometrically irreducible. Since smooth implies geometrically reduced, we have that $U$ is geometrically integral. Thus, the torsor $\tilde{f}_e : V \to U$ satisfies the conditions of Lemma~\ref{lem:notasheaf}. This combined with Lemma~\ref{lem:redet} gives a point $z \in Z(\A_k)$ which lies in $Z(\A_L)^{f_e} \setminus Z(\A_k)^{f_e}$.

Let $x = g(z)$. By construction $x$ lies in $g\left(Z(\A_L)^{f_e} \cap Z(\A_k)\right) \subset X(\A_L)^f \cap X(\A_k)$. To complete the proof it is enough to show that $x \notin X(\A_k)^f$. By way of contradiction, suppose $x \in X(\A_k)^f \subset X(\A_k)^g$. Then $x$ lifts to a twist $f' = g' \circ f_e'$ of $f = g \circ f$ by a class in $H^1(k,G)$ and $x$ lifts to the twist $g'$ of $g$ by the image $\xi$ of this class in $H^1(k,G/G_e)$. Since $x$ also lifts under $g$ we have $$\xi \in \Sha^1(K,G/G_e) := \bigcap_v \ker\left( \res_v : H^1(k,G/G_e) \to H^1(k_v,G/G_e)\right)\,.$$ Since $(G/G_e)(\ksep) = 0$, we have that $\Sha^1(k,G/G_e) = 0$ by \cite[Main Theorem]{GA-T}. So $g = g'$ and we see that $x \in g(Z(\A_k)^{f_e'})$. Noting that $Z(\A_k)^{f_e} = Z(\A_k)^{f_e'}$ we see that $x = g(z')$ for some $z' \in Z(\A_k)^{f_e}$. For each place $v$ of $k$, we have $g(z_v) = g(z'_v)$. Since $G/G_e(\ksep) = 0$ we must have $z_v = z'_v$ at each place, since $k_v$ points are separable \cite[Lemma 3.1]{PoonenVoloch}. So in fact $z = z'$. But this is impossible as $z \not\in Z(\A_k)^{f_e}$. 
\end{proof}

\subsection{The case $G(\ksep) = 0$}

It remains to prove Theorem~\ref{thm:MainThm2} in the case that $G$ does not contain a nontrivial \'etale subscheme. This follows immediately from the next result.

\begin{thm}\label{thm:Gksep0}
	Let $G$ be a finite abelian group scheme over a global field $k$ such that $G(\ksep)=0$. Let $X$ be a geometrically integral variety and let $f : Y \to X$ be a torsor under $G$. Then for any finite Galois extension $L/k$, $\left(X(\A_L)^f\right)^{\Gal(L/k)} = X(\A_k)^f$.
\end{thm}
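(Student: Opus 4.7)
The only inclusion requiring proof is $(X(\A_L)^f)^{\Gal(L/k)} \subset X(\A_k)^f$; the reverse containment is standard. I would begin by extracting two consequences of the hypothesis $G(\ksep)=0$. First, the connected-\'etale sequence shows that $G$ has no nontrivial \'etale subgroup scheme, so $G$ is connected and hence $G = \Spec A$ for a local Artinian $k$-algebra $A$ whose residue field is $k$ (because $A$ carries the identity $k$-point). Every ring map from $A$ to a field has kernel equal to the unique maximal ideal of $A$, so $G(K)=0$ for \emph{every} field extension $K/k$; in particular $G(L) = G(L_w) = G(k_v) = 0$ for all places $v$ of $k$ and $w$ of $L$. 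Second, $\Sha^1(L,G)=0$ by applying \cite{GA-T} over $L$, noting that $G_L$ still has trivial separable points.

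The main tool is the inflation-restriction sequence for fppf cohomology along an \'etale Galois cover. For any finite Galois extension $M/F$ with $F\supset k$, the Cartan-Leray spectral sequence applied to the \'etale cover $\Spec M \to \Spec F$ together with $G(M)=0$ produces an isomorphism
\[
H^1(F,G) \xrightarrow{\ \sim\ } H^1(M,G)^{\Gal(M/F)}.
\]
I would record this once, as it is applied twice: globally with $(F,M)=(k,L)$ and locally with $(F,M)=(k_v,L_w)$ for each $w\mid v$ (using that $L_w/k_v$ is Galois because $L/k$ is).

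Now let $(x_v) \in (X(\A_L)^f)^{\Gal(L/k)}$ and choose a witnessing $\eta \in H^1(L,G)$ with $\res_w\eta = x_w^* f$ at every place $w$ of $L$. For each $\sigma \in \Gal(L/k)$, the translate $\sigma\eta$ restricts to the same local classes, because $(x_v)$ is $k$-adelic and $f$ is defined over $k$; hence $\sigma\eta - \eta \in \Sha^1(L,G) = 0$, so $\eta$ is Galois-invariant. The global isomorphism above then yields a unique $\xi \in H^1(k,G)$ with $\res_{L/k}\xi = \eta$.

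Finally I verify $\res_v\xi = x_v^* f$ for every $v$. Both classes lie in $H^1(k_v,G)$, and their further restrictions to $H^1(L_w,G)$ agree: the former because $\res_w\xi = \res_w\eta = x_w^* f$, the latter because the $L$-adelic point is the base change of the $k$-adelic one. The local case of the isomorphism forces $H^1(k_v,G) \hookrightarrow H^1(L_w,G)$ to be injective, so $\res_v\xi = x_v^* f$, proving $(x_v) \in X(\A_k)^f$. I expect the only technical subtlety to be the Cartan-Leray sequence for fppf cohomology along an \'etale Galois cover --- standard but requiring care since $G$ need not be \'etale. Everything else is formal once the structural fact $G(K)=0$ for all fields $K/k$ and the vanishing of $\Sha^1(L,G)$ are in place.
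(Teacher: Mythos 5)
Your overall strategy tracks the paper's proof quite closely: both proceed via the inflation--restriction isomorphism $H^1(F,G)\xrightarrow{\sim} H^1(M,G)^{\Gal(M/F)}$ (the paper's Lemma~\ref{lem:autaia}), the vanishing $\Sha^1(L,G)=0$ from~\cite{GA-T}, Galois-invariance of the witnessing class, and local injectivity to match $x^*f$ with $\res_v(\xi)$. That said, there is a genuine error in your first paragraph.

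You claim that $G(\ksep)=0$ forces $G$ to be connected, reasoning from the connected--\'etale sequence that ``$G$ has no nontrivial \'etale subgroup scheme, so $G$ is connected.'' The implication is false: having no nontrivial \'etale \emph{subgroup} is not the same as having no nontrivial \'etale \emph{quotient}, and connectedness is about the latter ($\pi_0(G)=0$). Over an imperfect field---and global function fields, the only case where the theorem has content, are imperfect---the connected--\'etale sequence need not split, so these two conditions genuinely diverge. The paper itself exhibits the counterexample in the remark preceding Theorem~\ref{thm:etale}: for $G$ a nonsplit extension $0\to\mu_p\to G\to\Z/p\to 0$ (e.g.\ the $p$-torsion of an ordinary elliptic curve with $j$-invariant in $k\setminus k^p$), one has $G(\ksep)=0$ yet $\pi_0(G)\simeq\Z/p\ne 0$, so $G$ is not connected. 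Your consequence ``$G(K)=0$ for \emph{every} field extension $K/k$'' is therefore also false in general: for such $G$ there exist field extensions $K/k$ over which the class splitting the extension dies and $G(K)\ne 0$.

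Fortunately the specific instances you need---$G(L)=0$ and $G(L_w)=0$---do hold, but for a different reason. The case $G(L)=0$ is immediate since $L\subset\ksep$. For $G(L_w)=0$ the correct argument uses that a completion $L_w$ of the global function field $L$ is a \emph{separable} (transcendental) field extension of $L$: one checks $L_w\cap L^{1/p}=L$ (a uniformizer of $w$ lying in $L$ is not a $p$-th power in $L_w$), so every element of $L_w$ that is algebraic over $L$ is separable over $L$, and since $L/k$ is separable every such element is separable over $k$. Writing $G=\Spec A$ with $A=\prod_i A_i$, the nonidentity residue fields $\kappa_i$ are inseparable over $k$ (this is what $G(\ksep)=0$ says), hence none of them embed in $L_w$, giving $G(L_w)=0$. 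With this substitution your proof goes through and matches the paper's; but as written the step justifying $G(L_w)=0$ rests on a false premise.
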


\begin{lemma}\label{lem:autaia}
	Let $G$ be a finite abelian group scheme over a field $K$ and let $L/K$ be a Galois extension such that $G(L) = 0$. Then the restriction map $H^1(K,G) \to H^1(L,G)^{\Gal(L/K)}$ is an isomorphism.
\end{lemma}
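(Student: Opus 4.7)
The plan is to invoke the Hochschild--Serre spectral sequence in fppf cohomology for the Galois cover $\Spec L \to \Spec K$,
\[
E_2^{p,q} = H^p\bigl(\Gal(L/K),\, H^q_{\mathrm{fppf}}(L, G)\bigr) \Longrightarrow H^{p+q}_{\mathrm{fppf}}(K, G),
\]
and to extract its associated five-term exact sequence
\[
0 \to H^1\bigl(\Gal(L/K), G(L)\bigr) \to H^1(K, G) \to H^1(L, G)^{\Gal(L/K)} \to H^2\bigl(\Gal(L/K), G(L)\bigr).
\]
The hypothesis $G(L)=0$ then immediately kills the outer terms and yields the desired isomorphism. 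This is by far the cleanest route provided one is comfortable invoking Hochschild--Serre in the fppf setting.

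As a more hands-on alternative that bypasses spectral sequences, I would argue directly with torsors. For injectivity, any $G$-torsor $Y \to \Spec K$ whose class dies in $H^1(L,G)$ satisfies $Y(L) \neq \emptyset$; since $Y(L)$ is naturally a $G(L)$-torsor, the hypothesis forces it to be a singleton, and this unique $L$-point is necessarily $\Gal(L/K)$-fixed and hence lies in $Y(K)$, trivialising $Y$. For surjectivity, given a $G_L$-torsor $Y_L$ representing a $\Gal(L/K)$-stable class, one constructs a descent datum $\{\phi_\sigma : \sigma^* Y_L \xrightarrow{\sim} Y_L\}_{\sigma \in \Gal(L/K)}$: the mere existence of each $\phi_\sigma$ is exactly the invariance of the class, while the obstruction to assembling them into a cocycle lies in $H^2(\Gal(L/K), \Aut_{G_L\text{-tors}}(Y_L)) = H^2(\Gal(L/K), G(L)) = 0$. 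Effective fppf descent along the finite locally free cover $\Spec L \to \Spec K$ then produces a $G$-torsor over $K$ whose base change is $Y_L$.

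The main obstacle in either approach is not the vanishing of $G(L)$, which is the immediate input, but rather pinning down the correct technical tool: the Hochschild--Serre spectral sequence in fppf cohomology for a finite Galois cover of spectra of fields, or equivalently the effectivity of fppf descent along such a cover. Both are standard, and once either is in hand the conclusion is automatic. I would write up the spectral sequence version for brevity, since the five-term sequence makes the role of the hypothesis maximally transparent.
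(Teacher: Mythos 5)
Your primary approach is exactly the one the paper uses: the paper invokes the inflation--restriction exact sequence in fppf cohomology (citing Shatz), which is precisely the five-term exact sequence of the Hochschild--Serre spectral sequence you write down, and the hypothesis $G(L)=0$ kills the outer terms in the same way. The torsor-theoretic alternative you sketch is a valid fallback but is not needed; the spectral-sequence version matches the paper's proof.
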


\begin{proof}
	The inflation-restriction sequence in fppf cohomology (see \cite[p. 422]{Shatz}) gives an exact sequence
	\[
		H^1(L/K,G(L)) \stackrel{\inf}\to H^1(K,G) \stackrel{\res}\to H^1(L,G)^{\Gal(L/K)} \to H^2(L/K,G(L))\,.
	\]
	The outer two terms are trivial because $G(L) = 0$. Thus, the restriction map is an isomorphism.
\end{proof}

\begin{proof}[Proof of Theorem~\ref{thm:Gksep0}]
	Noting that $X(\A_L)^{\Gal(L/k)} = X(\A_k)$, it will be enough to show that we have $X(\mathbb{A}_k)^{f} = X(\mathbb{A}_L)^f \cap X(\mathbb{A}_k)$. The inclusion $\subseteq$ is clear, so we show the reverse inclusion. We have a commutative diagram
	\begin{equation}\label{diagram:autaia}
		\xymatrix{
		H^1(k, G) \ar@{^{(}->}[r] \ar@{^{(}->}[d] & \prod_v H^1(k_v, G) \ar@{^{(}->}[d] \\
 		H^1(L,G) \ar@{^{(}->}[r] & \prod_v\prod_{w \mid v} H^1(L_w, G)
 		}
	\end{equation}
	where the injectivity of the vertical maps come from Lemma~\ref{lem:autaia} and the injectivity of the horizontal maps is~\cite[Main Theorem]{GA-T}.

	Let $x \in X(\mathbb{A}_L)^f \cap X(\mathbb{A}_k)$ and consider the image $x^*f$ of $x$ under the map $X(\A_k) \to \prod_v H^1(k_v,G)$ induced by $f$. The image of $x^*f$ in $\prod_v \prod_{w \mid v} H^1(L_w, G)$ is the image of a unique $\xi \in H^1(L,G)$ under the bottom horizontal map of~\eqref{diagram:autaia} because $x \in X(\mathbb{A}_L)^f$. For any $v$ there is a natural action of $\Gal(L/k)$ on $\prod_{w\mid v}H^1(L_w,G)$ which is compatible with the action of $\Gal(L/k)$ on $H^1(L,G)$. The image of $x^*f$ in $\prod_{w \mid v} H^1(L_w, G)$ is fixed by this action, so we conclude that $\xi \in H^1(L,G)^{\Gal(L/k)}$. By Lemma~\ref{lem:autaia} we have that $\xi$ is the image of some $\xi' \in H^1(k,G)$. Since the maps are all injective, $x^*f$ must be the image of $\xi'$. This means $x \in X(\A_k)^f$.
\end{proof}

\begin{rem}
	In the proof above, the condition $G(L)=0$ is only used to ensure the existence of a lift of $x^*(f)$ from $\prod_v H^1(k_v, G)$ to $H^1(k, G)$, using that its image in $\prod_v\prod_{w \mid v} H^1(L_w, G)$ comes from an element in $H^1(L, G)$. The conclusion of Theorem~$\ref{thm:Gksep0}$ therefore holds for any (possibly infinite) abelian group scheme $G$ such that Diagram~\eqref{diagram:autaia} is Cartesian for all finite Galois extensions $L/k$. 
\end{rem}

\begin{proof}[Proof of Theorem~\ref{thm:MainThm2}]
	Theorem~\ref{thm:etale} gives the implication~\eqref{cond:1} $\Rightarrow$ ~\eqref{cond:2}  and Theorem~\ref{thm:Gksep0} gives the converse.
\end{proof}

\begin{rem}\label{rem:a'}
	Let $f: Y \to X$ be as in the statement of Theorem~\ref{thm:MainThm2} and assume further that $X$ is proper. Theorem~\ref{thm:etale} shows that Condition (a') of the introduction implies~\eqref{cond:2}. Since~\eqref{cond:1} implies (a'), we have that~\eqref{cond:1}, (a') and~\eqref{cond:2} are all equivalent.
\end{rem}

\section{Subvarieties of abelian varieties}\label{sec:BM}

Throughout this section $X \subset A$ denotes a geometrically irreducible smooth closed subvariety of an abelian variety over a global field $k$.

\begin{defn}\label{def:EtSheaf}
	We say that $X(\A_k)_\bullet^{\Br}$ \emph{defines an \'etale sheaf} if, for all finite Galois extensions $L/k$, we have that the image of $X(\A_k)_\bullet^{\Br}$ in $X(\A_L)_{\bullet}$ is equal to $\left(X(\A_L)_\bullet^{\Br}\right)^{\Gal(L/k)}$, and similarly for the other sets appearing in~\eqref{diagram1}.
\end{defn}

\begin{rem}
When $k$ is totally imaginary or a global function field, the definition is equivalent to the assignment $L \mapsto X(\A_L)^{\Br}_\bullet$ defining a sheaf of sets on $\Spec(k)_{\eT}$ in the usual sense. See the discussion around Condition (a') in the introduction.
\end{rem}

\subsection{The finite abelian descent set}
Following \cite[p. 352]{Stoll} we define $\widehat{\Sel}(A/k) = \varprojlim_n \Sel^n(A/k)$. This fits into the Cassels-Tate dual exact sequence \cite[Proposition 4.3]{PoonenVoloch}, which reads
\[
	0 \to \widehat{\operatorname{Sel}}(A/k) \to A(\A_k)_{\bullet} \stackrel{\phi}\to H^1(k,A^\vee)^*\,.
\]
The map $\phi$ is induced by the sum of the local Tate pairings $\langle \,,\,\rangle_{k_v} : A(k_v) \times H^1(k_v,A^\vee) \to \Q/\Z$. This sequence identifies $\widehat{\Sel}(A/k)$ with a subset of $A(\A_k)_\bullet$.

\begin{thm}\label{thmSelhat}
The sets	$\widehat{\Sel}(A/k)$ and $X(\A_k)_\bullet \cap \widehat{\Sel}(A/k)$ define \'etale sheaves.
\end{thm}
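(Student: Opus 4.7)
My overall strategy is to exploit the characterization $\widehat{\Sel}(A/k) = \ker(\phi_k)$ provided by the Cassels-Tate dual exact sequence, and analogously $\widehat{\Sel}(A/L) = \ker(\phi_L)$. The key technical input is the standard compatibility of the local Tate pairing with restriction and corestriction: for $a \in A(k_v)$, $b \in H^1(k_v, A^\vee)$, and $\eta \in H^1(L_w, A^\vee)$ with $w \mid v$, one has $\langle \res a, \res b\rangle_{L_w} = [L_w:k_v]\langle a, b\rangle_{k_v}$ and $\langle a, \operatorname{cor}_{L_w/k_v}\eta\rangle_{k_v} = \langle \res a, \eta\rangle_{L_w}$. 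Summing over all places yields, for $x \in A(\A_k)_\bullet$, $\eta \in H^1(L, A^\vee)$, and $\xi \in H^1(k, A^\vee)$, the two global identities
\begin{equation*}
\phi_L(x_L)(\eta) = \phi_k(x)(\operatorname{cor}_{L/k}\eta), \qquad \phi_L(x_L)(\res_{L/k}\xi) = [L:k]\,\phi_k(x)(\xi).
\end{equation*}
The first identity gives $\phi_k(x) = 0 \Rightarrow \phi_L(x_L) = 0$, establishing the inclusion (image of $\widehat{\Sel}(A/k)$ in $A(\A_L)_\bullet$) $\subseteq \widehat{\Sel}(A/L)^{\Gal(L/k)}$; the Galois invariance of $x_L$ is automatic since $x_L$ comes from a $k$-adelic point.

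For the reverse inclusion, given $y = x_L \in \widehat{\Sel}(A/L)^{\Gal(L/k)}$ with $x \in A(\A_k)_\bullet$ (using $A(\A_L)_\bullet^{\Gal(L/k)} = A(\A_k)_\bullet$), the goal is to conclude $\phi_k(x) = 0$. Applied to $\phi_L(x_L) = 0$, the two identities only yield that $\phi_k(x)$ vanishes on $\operatorname{cor}_{L/k} H^1(L, A^\vee)$ and is annihilated by $[L:k]$. Neither constraint alone forces $\phi_k(x) = 0$, and this is the main obstacle. To close the gap, I would invoke the finer description $\widehat{\Sel}(A/k) = \varprojlim_n \Sel^n(A/k)$ and verify the sheaf property level-by-level: for each $n$, it suffices to show that the class $\delta_n(x) \in \prod_v H^1(k_v, A[n])$ coming from the Kummer map lifts to $H^1(k, A[n])$ (any such lift automatically lies in $\Sel^n$ since the local components factor through $A(k_v)/n$). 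The hypothesis furnishes a lift $\tilde\xi_n \in H^1(L, A[n])$, whose image in $\prod_w H^1(L_w, A[n])$ is automatically $\Gal(L/k)$-invariant. The obstructions to descending $\tilde\xi_n$ to $k$ live in $H^i(L/k, A(L)[n])$ (from inflation-restriction) and are parametrised by the ambiguity $\Sha^1(L, A[n])$ in the choice of lift; all of these groups are finite and annihilated by $[L:k]$. The hard part will be showing that these bounded obstructions cannot persist uniformly in $n$ when passing to the inverse limit, so that a compatible family of descended lifts exists and recovers $x \in \widehat{\Sel}(A/k)$.

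The second assertion, concerning $X(\A_k)_\bullet \cap \widehat{\Sel}(A/k)$, follows formally from the first. Since $X(\A_L)_\bullet^{\Gal(L/k)}$ equals the image of $X(\A_k)_\bullet$ (modulo the injectivity caveats at real places discussed in the introduction), intersection commutes with the operation of taking $\Gal(L/k)$-invariants. Combining the just-established sheaf property of $\widehat{\Sel}(A/k)$ with the trivial sheaf property of $X(\A_\cdot)_\bullet$ then yields the desired equality for the intersection.
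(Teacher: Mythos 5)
Your easy inclusion and your identification of the central obstacle are both correct, and your reduction of the intersection statement to the statement about $\widehat{\Sel}(A/k)$ matches the paper. However, the proposal has a genuine gap in exactly the place you flag: the level-by-level inflation--restriction strategy does not obviously close, and you give no mechanism for producing a \emph{compatible} family of lifts across all $n$. The ambiguity in the choice of lift $\tilde\xi_n$ lives in $\Sha^1(L,A[n])$ and the descent obstruction in $H^2(L/k, A(L)[n])$, and while each is finite and killed by $[L:k]$, there is no a priori reason the obstruction classes align in the inverse limit; indeed, for a general abelian variety $\Sha^1(L,A[n])$ can grow with $n$, so ``the bounded obstructions cannot persist uniformly'' is precisely the assertion that needs a new idea, not a consequence of what you have set up.

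The paper takes a different route that sidesteps the inverse-limit difficulty entirely. Starting from $x_L \in \widehat{\Sel}(A/L)^{\Gal(L/k)}$ with $x_k \in A(\A_k)_\bullet$ mapping to it and $d = [L:k]$, one uses the restriction compatibility of the local Tate pairing (the same identity you wrote, $\phi_L(x_L)\circ\res_{L/k} = d\,\phi_k(x_k)$) together with $\phi_L(x_L)=0$ to conclude directly that $dx_k \in \widehat{\Sel}(A/k)$. Then, working with the compatible system of torsors under $A[n]$ lifting $dx_k$, one twists by a class in $\Sha^1(k,A[d])$ to factor through multiplication-by-$d$, producing an element $x'_k \in \widehat{\Sel}(A/k)$ with $dx'_k = dx_k$. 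The difference $x'_k - x_k$ then lies in $A[d](\A_k)_\bullet$, i.e.\ is a torsion adelic point, and over $L$ it lies in $\widehat{\Sel}(A/L)$. The crucial missing ingredient in your proposal is the structural theorem --- Stoll's Theorem~3.11 in the number field case and Poonen--Voloch Proposition~5.3 (with R\"ossler's improvement) in the function field case --- that a torsion adelic point lying in $\widehat{\Sel}(A/L)$ must be an actual rational torsion point of $A(L)$. Once you know $x'_L - x_L \in A(L)$, Galois descent for rational points (which is trivially true) gives $x'' \in A(k)$ with $x'' = x'_L - x_L$ over $L$, and then $x'' - x'_k \in \widehat{\Sel}(A/k)$ maps to $x_L$. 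In short: instead of trying to control obstruction groups at each finite level, the paper reduces to the sheaf property for \emph{rational points}, which holds for free, via the theorem that the torsion of $\widehat{\Sel}$ is exactly the torsion of the Mordell--Weil group.
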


\begin{proof}
Let $L/k$ be any finite Galois extension and let $d=[L:k]$. It suffices to prove the result for $\widehat{\Sel}(A/k)$, since $$\left(X(\A_L)_\bullet \cap \widehat{\Sel}(A/L)\right)^{\Gal(L/k)} = X(\A_L)_\bullet^{\Gal(L/k)} \cap \widehat{\Sel}(A/L)^{\Gal(L/k)}\,,$$ and $X(\A_L)_\bullet^{\Gal(L/k)}$ is the image of $X(\A_k)_\bullet$ in $X(\A_L)_\bullet$.

Let $x_L \in\widehat{\Sel}(A/L)^{\Gal(L/k)} \subseteq A(\A_L)_\bullet^{\Gal(L/k)}$. Since $A(\A_k)_\bullet$ defines an étale sheaf, there exists an element of $A(\A_k)_\bullet$ mapping to $x_L$ under the natural map $A(\A_k)_\bullet \to A(\A_L)_\bullet$, which we will call $x_k$. We first claim that $dx_k \in \widehat{\operatorname{Sel}}(A/k)$. For this we use the Cassels-Tate dual exact sequence. For any place $v$, passing to an extension $L^v/k_v$ of degree $d_v$ multiplies the local Tate pairing by $d_v$, i.e., we have $\langle x_{k_v},\textup{res}_{L^v/k_v}(\alpha) \rangle_{L^v} =  d_v\langle x_{L_v} , \alpha \rangle_{k_v}$. From this we deduce that for any finite extension $K/k$ we have $[K:k]\phi(x_k) = \phi_K(x_K) \circ \res_{K/k} \in H^1(k,A^\vee)^*$, where we write $x_K$ to denote the image of $x_k$ in $A(\A_K)_\bullet$. The assumption that $x \in \widehat{\Sel}(A/L)^{\Gal(L/k)}$ together with exactness of the sequence gives $\phi_L(x_L) = 0$, so $\phi(dx_k) = d\phi(x_k) = \phi_L(x_L)\circ \res_{L/k} = 0$, showing that $dx_k \in \widehat{\Sel}(A/k)$. 

Since $dx_k \in \widehat{\operatorname{Sel}}(A/k) = \varprojlim_{n} \Sel^n(A/k)$, there is a compatible system of geometrically connected torsors $f_n : Y_n \to A$ under $A[n]$ containing lifts $y_n \in Y_n(\A_k)$ of $dx_k$. Here compatible means that for each $m,n$, we have a torsor structure $Y_{mn} \to Y_n$ under $A[m]$ sending $y_{mn}$ to $y_n$. The trivial torsor $[d] : A \to A$ contains a lift of $dx_k$ by hypothesis, so $f_d$ must be a twist of this trivial covering by an element $\xi \in \Sha^1(k,A[d])$. For each $n \ge 1$, let $g_{nd} : Z_{nd} \to A$ be the twist of $f_{nd}$ by the image of $-\xi$ under the map $\Sha^1(k,A[d]) \to \Sha^1(k,A[nd])$ induced by the inclusion $A[d] \hookrightarrow A[nd]$. Then $g_d =  [d] : A \to A$ and $g_{dn}$ factors as $g_{dn} = [d] \circ h_n$ for some torsor $h_n : Z_{nd} \to A$ under $A[n]$. Since $\xi$ is locally trivial and the $f_{nd}$ contain lifts of $dx_k$, we have we have $dx_k^*g_{nd} = dx_k^*f_{nd} = 0$, for all $n$. It follows that the family $h_n$ determines an element in $\widehat{\Sel}(A/k)$, and consequently an adelic point $x'_k \in A(\A_k)_\bullet$. By construction $dx'_k = dx_k$, so $x'_k-x_k \in A[d](\A_k)_\bullet$ is the connected component of adelic point whose nonarchimedean components are contained in a finite subscheme of $A$. 

Consider then the element in $A[d](\A_L)_\bullet$ given by the image of $x'_k-x_k$, which we call $x'_L-x_L$. By assumption $x'_L - x_L$ lies in $\widehat{\operatorname{Sel}}(A/L)$. Using \cite[Theorem 3.11]{Stoll} in the number field case and \cite[Proposition 5.3]{PoonenVoloch} in the function field case (Note that the additional hypothesis on $A$ there can be dropped thanks to work of R\"ossler; see \cite[Proposition 3.1]{CVNonIsotriv}) we obtain $x'_L-x_L \in A[d](L) \subseteq A(L)$. However, $x'_L-x_L$ is also fixed by $\Gal(L/k)$, so $x'_L-x_L \in A(L)^{\Gal(L/k)}$. Since $A(k) \to A(L)^{\Gal(L/k)}$ is always a bijection, there exists a unique element $x'' \in A(k) \subseteq A(\A_k)_\bullet$ which maps to $x'_L-x_L$ under the base change map. Consider then the element $x''-x'_k \in A(\A_k)_\bullet$. Since $x'' \in A(k)$, the element $x''-x'_k$ lies in $\widehat{\Sel(A/k)}$ and maps to $x_L$ under the map $A(\A_k)_\bullet \to A(\A_L)_\bullet$, so the map $\widehat{\Sel(A/k)} \to \widehat{\Sel(A/L)}^{\Gal(L/k)}$ is surjective as required. 
\end{proof}

\begin{proof}[Proof of Theorem~\ref{thm:Main2}]
	As noted on \cite[p. 373]{Stoll} the image of $\widehat{\Sel}(A/k)$ in $A(\A_k)_\bullet$ is equal to $A(\A_k)_\bullet^{\textup{f-ab}}$. With this identification Theorem~\ref{thm:Main2} is just a restatement of Theorem~\ref{thmSelhat}.
\end{proof}

\begin{proof}[Proof of Corollary~\ref{cor:Main2}]
	To prove that $\widehat{\Sel}(A/k)$ equals $A(\A_k)_\bullet^{\textup{f-ab}}$ one uses that the N\'eron-Severi group of $A$ is torsion free, and so the geometrically abelian fundamental group of $A$ is isomorphic to its Tate module. The same argument works to show $X(\A_k)_\bullet^{\textup{f-ab}} = X(\A_k)_\bullet\cap \widehat{\Sel}(A/k)$ when the N\'eron-Severi group of $X$ is torsion free and $X \subset A$ is embedded by an Albanese map (See \cite[Remark 6.5]{Stoll}). 
\end{proof}

\subsection{Sheafiness of the other terms in~\eqref{diagram1}}

\begin{lemma}\label{lem:1}
The sets $\overline{A(k)}$ and $X(\A_k)_\bullet \cap \overline{A(k)}$ define \'etale sheaves.
\end{lemma}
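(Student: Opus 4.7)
The plan is to reduce to the $\overline{A(k)}$ statement and then adapt the strategy of Theorem~\ref{thmSelhat}. The statement for $X(\A_k)_\bullet \cap \overline{A(k)}$ follows formally from the one for $\overline{A(k)}$: $X$ is proper so $X(\A_L)_\bullet^{\Gal(L/k)}$ is the image of $X(\A_k)_\bullet$, and Galois-fixed points commute with intersections under the injective maps $X(\A_k)_\bullet \hookrightarrow X(\A_L)_\bullet$ and $A(\A_k)_\bullet \hookrightarrow A(\A_L)_\bullet$. The forward inclusion $\overline{A(k)} \subseteq \overline{A(L)}^{\Gal(L/k)}$ is immediate from the continuity of $A(\A_k)_\bullet \to A(\A_L)_\bullet$ together with $A(k) \subseteq A(L)^{\Gal(L/k)}$.

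For the reverse, fix a finite Galois $L/k$ of degree $d$, let $x_L \in \overline{A(L)}^{\Gal(L/k)}$, and let $x_k \in A(\A_k)_\bullet$ be its unique preimage. Pick $y_n \in A(L)$ with $y_n \to x_L$. Since $x_L$ is Galois-fixed, each $\sigma(y_n)$ also tends to $x_L$, so the norm $N_{L/k}(y_n) = \sum_{\sigma} \sigma(y_n) \in A(L)^{\Gal(L/k)} = A(k)$ converges to $d x_L$ in $A(\A_L)_\bullet$. As $A(\A_k)_\bullet \hookrightarrow A(\A_L)_\bullet$ is a closed embedding (up to the archimedean exceptional case discussed in the introduction, which only collapses factors irrelevantly to the closure), this yields $d x_k \in \overline{A(k)}$. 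Moreover, since $\overline{A(L)} \subseteq \widehat{\Sel}(A/L)$ and $x_L \in \widehat{\Sel}(A/L)^{\Gal(L/k)}$, Theorem~\ref{thmSelhat} forces $x_k \in \widehat{\Sel}(A/k)$.

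The final step, and the main obstacle, is to promote $d x_k \in \overline{A(k)}$ to $x_k \in \overline{A(k)}$ using also $x_k \in \widehat{\Sel}(A/k)$. I would invoke the structural fact that the quotient $\widehat{\Sel}(A/k)/\overline{A(k)}$ is torsion-free. Using Mordell--Weil in the number field case, or Lang--N\'eron plus the R\"ossler-type input cited in the proof of Theorem~\ref{thmSelhat} in the function field case, one identifies $\overline{A(k)}$ with the image of the profinite completion $A(k) \otimes \widehat{\mathbb{Z}}$ in $A(\A_k)_\bullet$; taking the inverse limit of the Kummer exact sequences $0 \to A(k)/n \to \Sel^n(A/k) \to \Sha(A/k)[n] \to 0$ then yields $0 \to A(k) \otimes \widehat{\mathbb{Z}} \to \widehat{\Sel}(A/k) \to T\Sha(A/k) \to 0$, and the Tate module $T\Sha(A/k)$ is torsion-free as a $\widehat{\mathbb{Z}}$-module. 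Consequently $[x_k] \in \widehat{\Sel}(A/k)/\overline{A(k)}$ is $d$-torsion, hence zero, giving $x_k \in \overline{A(k)}$. An essentially equivalent direct route is to construct $x'_k \in \overline{A(k)}$ with $d x'_k = d x_k$ by compactness (after fixing a class in the finite group $A(k)/dA(k)$ along a subsequence of approximations $a_n \to dx_k$ with $a_n \in A(k)$) and then remove the remaining ambiguity using the Stoll--Poonen--Voloch identity $\widehat{\Sel}(A/k) \cap A[d](\A_k)_\bullet = A[d](k)$ as in the proof of Theorem~\ref{thmSelhat}.
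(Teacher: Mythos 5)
Your proof is correct, but it follows a genuinely different and considerably heavier route than the paper's. The paper's argument is four lines: by \cite[Theorem E]{PoonenVoloch} one has $\overline{A(L)} \simeq A(L)\otimes\widehat{\Z}$ as $\Gal(L/k)$-modules, and since $A(L)$ is finitely generated (Mordell--Weil / Lang--N\'eron) and $\widehat{\Z}$ is flat over $\Z$, taking $\Gal(L/k)$-invariants commutes with $-\otimes\widehat{\Z}$, giving $(A(L)\otimes\widehat{\Z})^{\Gal(L/k)} = A(k)\otimes\widehat{\Z} = \overline{A(k)}$ directly. Your argument instead first establishes only the weaker conclusion $d\cdot x_k \in \overline{A(k)}$ via a norm/trace averaging trick, and then must remove the factor of $d$; you do so by appealing to Theorem~\ref{thmSelhat} to place $x_k$ in $\widehat{\Sel}(A/k)$, and to torsion-freeness of $\widehat{\Sel}(A/k)/\overline{A(k)}$, which you identify (via the inverse limit of Kummer sequences) with the Tate module of $\Sha(A/k)$. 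This is sound, but it invokes strictly more machinery, and the identification $\overline{A(k)} = A(k)\otimes\widehat{\Z}$ you need at the end is exactly the Poonen--Voloch input the paper uses from the outset, which makes the whole norm-and-divide detour unnecessary. One small inaccuracy: $x_L$ need not have a \emph{unique} preimage in $A(\A_k)_\bullet$, since the map $A(\A_k)_\bullet \to A(\A_L)_\bullet$ can collapse connected components at a real place of $k$ lying below complex places of $L$; because Definition~\ref{def:EtSheaf} is stated in terms of images this does not sink the argument, but the paper's module-level computation avoids the issue entirely by never descending to the level of adelic points.
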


\begin{proof}
The topological closure of $A(L)$ in $A(\A_L)_\bullet$ and the profinite completion $\widehat{A(L)}$ are isomorphic as $\Gal(L/k)$-modules by \cite[Theorem E]{PoonenVoloch}. Since $A(L)$ is finitely generated we also have  an isomorphism of $\Gal(L/k)$-modules $\widehat{A(L)} \simeq A(L) \otimes \widehat{\Z}$. Then $(A(L) \otimes \widehat{\Z})^{\Gal(L/k)} = A(L)^{\Gal(L/k)}\otimes \widehat{\Z} = A(k) \otimes \widehat{\Z}$, the latter being identified with $\overline{A(k)}$ by \cite[Theorem E]{PoonenVoloch}. Thus $\overline{A(k)}$ defines an \'etale sheaf. The statement about $X(\A_k)_\bullet \cap \overline{A(k)}$ follows instantly since 
$$
\left(X(\A_L)_\bullet \cap \overline{A(L)}\right)^{\Gal(L/k)} = X(\A_L)_\bullet^{\Gal(L/k)} \cap \overline{A(L)}^{\Gal(L/k)}\,,$$ and $X(\A_L)_\bullet^{\Gal(L/k)}$ is equal to the image of $X(\A_k)_\bullet$ in $X(\A_L)_\bullet$.

\end{proof}

\begin{lemma}\label{lem:2}
If $k$ is a number field or the maximal divisible subgroup of $\Sha(A/k)$ is trivial, then $A(\A_k)_\bullet^{\Br}$ defines an \'etale sheaf.
\end{lemma}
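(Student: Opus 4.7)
The plan is to reduce Lemma~\ref{lem:2} to Theorem~\ref{thmSelhat} by establishing $A(\A_k)_\bullet^{\Br} = \widehat{\Sel}(A/k)$ under the stated hypotheses. One inclusion is given by~\eqref{diagram1}. For the reverse, I would split into cases. If $\Sha(A/k)_{\textup{div}} = 0$, then taking the inverse limit of the short exact sequences $0 \to A(k)/n \to \Sel^n(A/k) \to \Sha(A/k)[n] \to 0$ (using Mittag--Leffler for the leftmost terms and identifying $\widehat{A(k)}$ with $\overline{A(k)}$ via \cite[Theorem E]{PoonenVoloch}) yields $\widehat{\Sel}(A/k) = \overline{A(k)}$; since the Brauer set is closed in the adelic topology and contains $A(k)$ (by global reciprocity), it contains $\overline{A(k)}$, so the sandwich $\overline{A(k)} \subseteq A(\A_k)_\bullet^{\Br} \subseteq \widehat{\Sel}(A/k)$ forces equality. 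If $k$ is a number field and $\Sha(A/k)_{\textup{div}}$ is nontrivial, the equality still holds via Manin's identification of the Brauer--Manin pairing on an abelian variety with the Cassels--Tate pairing on $\Sha(A/k)$ (see \cite[\S 6.2]{SkorobogatovBook}).

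Granted the equality, the rest is formal. Let $L/k$ be a finite Galois extension. For the forward containment, functoriality of the Brauer--Manin obstruction under base change (via corestriction of Brauer classes, compatible with the local invariants) shows that the image of $A(\A_k)_\bullet^{\Br}$ in $A(\A_L)_\bullet$ is $\Gal(L/k)$-invariant and contained in $A(\A_L)_\bullet^{\Br}$, hence lies in $(A(\A_L)_\bullet^{\Br})^{\Gal(L/k)}$. For the reverse, the universal inclusion $A(\A_L)_\bullet^{\Br} \subseteq \widehat{\Sel}(A/L)$ (from~\eqref{diagram1} applied over $L$, which requires no hypothesis) gives $(A(\A_L)_\bullet^{\Br})^{\Gal(L/k)} \subseteq \widehat{\Sel}(A/L)^{\Gal(L/k)}$; by Theorem~\ref{thmSelhat} this latter set equals the image of $\widehat{\Sel}(A/k)$ in $A(\A_L)_\bullet$, which by the equality from the first paragraph coincides with the image of $A(\A_k)_\bullet^{\Br}$.

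The main obstacle is establishing $A(\A_k)_\bullet^{\Br} = \widehat{\Sel}(A/k)$, particularly in the number field case with nontrivial divisible $\Sha$, where one genuinely needs the Cassels--Tate pairing and cannot merely argue via the Selmer exact sequence. A pleasing feature of this approach is that no hypothesis on $\Sha(A/L)_{\textup{div}}$ is imposed for the extension $L$: divisible elements of $\Sha$ may well appear upon base change, but only the weaker inclusion $A(\A_L)_\bullet^{\Br} \subseteq \widehat{\Sel}(A/L)$ is used on that side, with Theorem~\ref{thmSelhat} carrying the argument back to the ground field where the hypothesis applies.
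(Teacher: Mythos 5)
Your argument in the case $\Sha(A/k)_{\textup{div}} = 0$ is correct and in fact makes explicit the calculation that the paper outsources to \cite[Remark 4.5]{PoonenVoloch}: taking inverse limits of $0 \to A(k)/n \to \Sel^n(A/k) \to \Sha(A/k)[n] \to 0$, the Mittag--Leffler condition kills $\varprojlim^1$, and $\varprojlim_n \Sha(A/k)[n]$ vanishes when $\Sha(A/k)$ has no divisible subgroup, giving $\widehat{\Sel}(A/k) = \widehat{A(k)} = \overline{A(k)}$. Combining this with the standard containment $\overline{A(k)} \subseteq A(\A_k)_\bullet^{\Br} \subseteq A(\A_k)_\bullet^{\textup{f-ab}} = \widehat{\Sel}(A/k)$ then forces equality, and your second paragraph correctly observes that the ground-field equality together with Theorem~\ref{thmSelhat} (and the easy functoriality in the forward direction) is all that is needed. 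That part is sound and parallels the paper.

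The gap is in the number field case when $\Sha(A/k)_{\textup{div}} \neq 0$. You invoke Manin's identification of the Brauer--Manin pairing with the Cassels--Tate pairing (citing \cite[\S 6.2]{SkorobogatovBook}), but this identification concerns the \emph{algebraic} Brauer group $\Br_1(A)$, and yields $A(\A_k)_\bullet^{\Br_1} = A(\A_k)_\bullet^{\textup{f-ab}} = \widehat{\Sel}(A/k)$. It does not, by itself, rule out additional obstructions coming from transcendental Brauer classes, which exist on abelian varieties of dimension $\geq 2$ since $\Br(\overline{A})$ is then nontrivial. What you actually need is $A(\A_k)_\bullet^{\Br} = A(\A_k)_\bullet^{\Br_1}$, i.e.\ that transcendental classes impose no further condition. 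This is precisely the content of \cite[Theorem 1]{CTrBM}, which the paper cites for this step; it is a genuine theorem and not a formal consequence of Manin's pairing or of anything in \cite[\S 6.2]{SkorobogatovBook}. Your proof should replace the appeal to Manin's identification with a citation of \cite{CTrBM} (or, failing that, a proof that $\Br(A)/\Br_1(A)$ pairs trivially with $\widehat{\Sel}(A/k)$, which is nontrivial).
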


\begin{proof}
In the number field case, \cite[Theorem 1]{CTrBM} implies $A(\A_k)_\bullet^{\Br} = A(\A_k)_\bullet^\textup{f-ab}$, so this follows from Theorem~\ref{thmSelhat}. If the divisible subgroup of $\Sha(A/k)$ is trivial then $\overline{A(k)} = A(\A_k)_\bullet^{\Br} = A(\A_k)_\bullet^{\textup{f-ab}}$ (See \cite[Remark 4.5]{PoonenVoloch}), so this follows from either Lemma~\ref{lem:1} or Theorem~\ref{thmSelhat}.
\end{proof}

\begin{rem}\label{rem:2}
	When $A/k$ is an isotrivial abelian variety, we have that $\Sha(A/k)$ is finite. For $A/k$ constant, this is shown in \cite{Milne68}, and as mentioned in \cite[Remark 6.27]{MilneADT} it is an easy extension to the isotrivial case.
\end{rem}

\begin{lemma}\label{lem:3}
Let $X \subset A$ be a curve embedded in its Jacobian by an Albanese map (i.e., a map sending a point $P$ to the class of $P-D$ for a fixed $k$-rational divisor $D \in \operatorname{Div}(X)$ of degree $1$). Suppose that $X$ has genus $1$ or is nonisotrivial. Then $\overline{X(k)}$ defines an \'etale sheaf.
\end{lemma}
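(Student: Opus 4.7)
The plan is to reduce to Lemma~\ref{lem:1} by showing that, under either hypothesis, $\overline{X(k)}$ coincides with $X(\A_k)_\bullet \cap \overline{A(k)}$, and similarly after every finite extension, so that the sheaf property is imported directly.

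In the genus 1 case, I would first argue that the Albanese hypothesis forces $X$ to be isomorphic over $k$ to its Jacobian. Indeed, with $D \in \operatorname{Div}(X)$ a $k$-rational divisor of degree $1$ on a curve of genus $1$, Riemann-Roch gives
$$h^0(D) - h^0(K-D) = \deg(D) + 1 - g = 1,$$
and $\deg(K-D) = -1$ forces $h^0(K-D)=0$, so $|D|$ consists of a single effective $k$-rational divisor, necessarily of the form $[P]$ for some $P \in X(k)$. The Albanese map $\phi: Q \mapsto [Q-D]$ is then an isomorphism $X \xrightarrow{\sim} A = \Jac(X)$ of $k$-varieties, under which $\overline{X(k)}$ is identified with $\overline{A(k)}$. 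Since the same identification persists after base change to any finite Galois $L/k$, Lemma~\ref{lem:1} (applied to $A$, with $X=A$) gives the claim.

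In the nonisotrivial case $k$ is a global function field, and I would invoke the main result of \cite{CVNonIsotriv}, which proves Conjecture~\ref{conj1} for every nonisotrivial curve of genus at least $2$ over a global function field. In particular the bottom-left containment of~\eqref{diagram1} is an equality, so $\overline{X(k)} = X(\A_k)_\bullet \cap \overline{A(k)}$. To propagate this identification up every Galois tower, I would note that nonisotriviality is stable under finite extensions: if $X_L$ were isotrivial via some further finite $M/L$, then $X_M$ would be constant and the finite extension $M/k$ would witness isotriviality of $X$ over $k$. Consequently $\overline{X(L)} = X(\A_L)_\bullet \cap \overline{A(L)}$ for every finite extension $L/k$, and the desired sheaf condition becomes exactly the one proved in Lemma~\ref{lem:1} for $X(\A_k)_\bullet \cap \overline{A(k)}$.

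I expect no serious obstacle: the substantive input in the nonisotrivial case is outsourced to \cite{CVNonIsotriv}, and the remaining reductions are routine. The only slightly delicate step is the Riemann-Roch argument in the genus $1$ case, which is needed to convert the Albanese hypothesis into a genuine isomorphism $X \cong A$ rather than merely a torsor relationship.
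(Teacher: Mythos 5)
Your argument is correct in substance, but in the genus $\ge 2$ case it takes a considerably heavier route than the paper does. The paper's proof there is one line: under either hypothesis (number field, or nonisotrivial function field curve) $X(k)$ is \emph{finite} by Faltings resp. Samuel/Manin--Grauert, so $\overline{X(k)} = X(k)$ as a subset of $X(\A_k)_\bullet$, and $L \mapsto X(L)$ is trivially a sheaf of sets. You instead invoke the main theorem of \cite{CVNonIsotriv}, namely that \emph{all} the containments in \eqref{diagram1} are equalities for nonisotrivial curves of genus $\ge 2$, extract the single equality $\overline{X(k)} = X(\A_k)_\bullet \cap \overline{A(k)}$, check that nonisotriviality propagates to finite extensions, and then outsource the sheaf property to Lemma~\ref{lem:1}. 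This works, and the propagation argument you give (if $X_L$ became constant over $M$ then $M/k$ would trivialise $X$) is the right one, but it deploys the full Brauer--Manin result where Mordell--Faltings alone suffices. One further gap worth noting: your reading of ``nonisotrivial'' as forcing $k$ to be a function field means your proof does not cover the case of a number field with genus $\ge 2$, which the paper's proof handles at no extra cost via finiteness of $X(k)$; the paper's phrasing of the lemma is loose on this point, but Theorem~\ref{thm:Curves} does rely on the number-field case being covered. Finally, in the genus $1$ case the Riemann--Roch computation you give, while correct, is more than is needed: the Albanese map $P \mapsto [P-D]$ is already a morphism $X \to A$ of smooth projective genus $1$ curves of degree $1$, hence an isomorphism; the paper simply asserts $X = A$.
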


\begin{proof}
If $X$ has genus $1$, then $X = A$ and this follows from Lemma~\ref{lem:1}. So suppose $X$ has genus $\ge 2$. If $k$ is a number field or $X$ is a nonisotrivial, then $X(k)$ is finite. Then $\overline{X(k)}=X(k)$ which clearly defines an \'etale sheaf.
\end{proof}

Lemma~\ref{lem:CbarIsotrivial} of the following section shows that Lemma~\ref{lem:3} also holds for isotrivial curves. We will postpone proving this as it requires some setup particular to isotrivial varieties. Admitting this for now we have the following.

\begin{thm}\label{thm:Curves}
	Suppose $X \subset A$ is a curve embedded in its Jacobian by an Albanese map (i.e., a map sending a point $P$ to the class of $P-D$ for a fixed $k$-rational divisor $D \in \operatorname{Div}(X)$ of degree $1$). Then all of the sets in~\eqref{diagram1} define \'etale sheaves.
\end{thm}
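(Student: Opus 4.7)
The plan is to verify that each of the six sets appearing in~\eqref{diagram1} defines an \'etale sheaf. Four of the six follow immediately from results already established. Since a smooth projective curve has N\'eron--Severi group $\Z$, which is torsion free, Corollary~\ref{cor:Main2} gives that $X(\A_k)_\bullet^{\textup{f-ab}}$ is a sheaf; Theorem~\ref{thm:Main2} handles $X(\A_k)_\bullet \cap A(\A_k)_\bullet^{\textup{f-ab}}$; Lemma~\ref{lem:1} handles $X(\A_k)_\bullet \cap \overline{A(k)}$; and Lemma~\ref{lem:3} together with the forthcoming Lemma~\ref{lem:CbarIsotrivial} handles $\overline{X(k)}$.

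The remaining two terms are the Brauer sets. The first step is to prove the equality
\[
X(\A_k)_\bullet^{\Br} = X(\A_k)_\bullet \cap A(\A_k)_\bullet^{\Br}
\]
for any curve embedded in its Jacobian by an Albanese map. The inclusion $\subseteq$ is already recorded in~\eqref{diagram1}. For the reverse, I will use two inputs. First, by Tsen's theorem $\Br(X_{\ksep})=0$, so $\Br(X)=\Br_1(X)$. Second, the defining property of the Albanese map forces $i^*\colon \Pic^0(A)\to \Pic^0(X)$ to be an isomorphism, and combined with the principal polarization of $A=\Jac(X)$ this yields an isomorphism $H^1(k,A^\vee)\cong H^1(k,\Pic^0(\Xbar))$. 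Via the Hochschild--Serre identification of $\Br_1/\Br(k)$ with a subgroup of $H^1(k,\Pic(\overline{\,\cdot\,}))$, this propagates to a surjection $i^*\colon \Br_1(A)/\Br(k)\twoheadrightarrow \Br_1(X)/\Br(k)=\Br(X)/\Br(k)$. Consequently, any adelic point $x\in X(\A_k)_\bullet$ with $i(x)\in A(\A_k)_\bullet^{\Br}$ is orthogonal to $i^*\Br(A)$ and hence, modulo harmless constants from $\Br(k)$, to all of $\Br(X)$.

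Given the displayed equality, it remains to verify that $X(\A_k)_\bullet\cap A(\A_k)_\bullet^{\Br}$ is a sheaf. Whenever $A(\A_k)_\bullet^{\Br}$ itself is a sheaf, the short intersection argument from Lemma~\ref{lem:1} transfers the property to the subset cut out by $X(\A_k)_\bullet$. Lemma~\ref{lem:2} supplies sheafiness of $A(\A_k)_\bullet^{\Br}$ unconditionally in the number-field case and under the hypothesis $\Sha(A/k)_{\mathrm{div}}=0$ in general; Remark~\ref{rem:2} ensures the latter holds whenever $A=\Jac(X)$ is isotrivial. Finally, if $k$ is a function field and $X$ is a nonisotrivial curve of genus at least two, the work of \cite{CVNonIsotriv} establishes Conjecture~\ref{conj1}, so all six sets in~\eqref{diagram1} collapse to the finite set $X(k)$ and are trivially sheaves. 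The subtlest case I anticipate is a nonisotrivial genus-one curve over a function field, where $X=A$ is an elliptic curve with possibly nontrivial $\Sha_{\mathrm{div}}$ and Lemma~\ref{lem:2} does not apply directly; this case presumably requires an ad hoc Cassels--Tate pairing argument, or an identification of $A(\A_k)_\bullet^{\Br}$ with $\widehat{\Sel}(A/k)$ from Theorem~\ref{thmSelhat}, to complete the proof.
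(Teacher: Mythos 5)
Your handling of four of the six sets is correct and matches the paper. The gap, which you yourself flag, is in the treatment of the two Brauer sets: your plan routes sheafiness of $X(\A_k)_\bullet \cap A(\A_k)_\bullet^{\Br}$ through the sheafiness of $A(\A_k)_\bullet^{\Br}$ itself via Lemma~\ref{lem:2}, and that lemma genuinely does not apply to a nonisotrivial genus-one curve over a function field, where $X=A$ is a nonisotrivial elliptic curve whose $\Sha_{\mathrm{div}}$ need not vanish. Your suggested patch (``an ad hoc Cassels--Tate pairing argument, or an identification of $A(\A_k)_\bullet^{\Br}$ with $\widehat{\Sel}(A/k)$'') amounts to re-proving a piece of \cite[Corollary 7.3]{Stoll}, which is exactly the ingredient you're missing.

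The paper avoids all of this by invoking \cite[Corollary 7.3]{Stoll} once: for a curve $X$ embedded in its Jacobian by an Albanese map, one has the chain of equalities
\[
X(\A_k)_\bullet \cap A(\A_k)_\bullet^{\Br} = X(\A_k)_\bullet^{\Br} = X(\A_k)_\bullet^{\textup{f-ab}} = X(\A_k)_\bullet \cap A(\A_k)_\bullet^{\textup{f-ab}}\,.
\]
This collapses all four middle and right-hand sets of~\eqref{diagram1} into one, and that single set is a sheaf by Theorem~\ref{thmSelhat}. In particular, one never needs to argue about $A(\A_k)_\bullet^{\Br}$ as a standalone object, so the case split on number field / isotrivial / nonisotrivial genus $\ge 2$ / nonisotrivial genus $1$ that you carry out is unnecessary, and the elliptic-curve case causes no trouble. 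Your direct argument for $X(\A_k)_\bullet^{\Br}=X(\A_k)_\bullet\cap A(\A_k)_\bullet^{\Br}$ via Tsen and the Picard comparison is morally part of the proof of Stoll's Corollary 7.3, but on its own it doesn't deliver sheafiness; the crucial additional fact is the equality $X(\A_k)_\bullet^{\Br}=X(\A_k)_\bullet^{\textup{f-ab}}$, which is what lets one transfer to the descent set where Theorem~\ref{thmSelhat} applies. To close your argument, replace the appeal to Lemma~\ref{lem:2} with the citation to \cite[Corollary 7.3]{Stoll}.
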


\begin{proof}
	First note that $X(\A_k)_\bullet \cap A(\A_k)_\bullet^{\Br} = X(\A_k)_\bullet^{\Br} = X(\A_k)_\bullet^{\textup{f-ab}} = X(\A_k)_\bullet \cap A(\A_k)_\bullet^{\textup{f-ab}}$ (See \cite[Corollary 7.3]{Stoll}). So all of these define \'etale sheaves by Theorem~\ref{thmSelhat}. We have that $X(\A_k)_\bullet \cap \overline{A(k)}$ defines an \'etale sheaf by Lemma~\ref{lem:1}. If $X$ is nonisotrivial or has genus $1$, then $\overline{X(k)}$ defines an \'etale sheaf by Lemma~\ref{lem:3}. If $X$ is isotrivial and of genus $\ge 2$, then $\overline{X(k)}$ defines an \'etale sheaf by Lemma~\ref{lem:CbarIsotrivial}. 
\end{proof}

\begin{rem}\label{rem:3}
	Suppose $C$ is a curve of genus $\ge 1$ over $k$. Then $C$ embeds canonically into $\Pic^1_C$ which is a torsor under the Jacobian $A = \Pic^0_C$. If $C(\A_k) \ne \emptyset$, then $\Pic^1_C$ represents a class in $\Sha(A/k)$ and $C$ has a $k$-rational divisor of degree $1$ if and only if this class in $\Sha(A/k)$ is trivial. If $C(\A_k)_\bullet^{\Br} \ne \emptyset$, then the class of $\Pic^1_C$ is a divisible element in $\Sha(A/k)$ by \cite[Proposition 3.3.5 and Theorem 6.1.2]{SkorobogatovBook}. Thus, for any curve with $\Sha(A/k)$ finite (e.g., any isotrivial curve as noted in Remark~\ref{rem:2}) we either have an Albanese embedding of $C$ into its Jacobian (and so Theorem~\ref{thm:Curves} applies) or we have $\overline{C(k)} = C(\A_k)_\bullet^{\Br} = \emptyset$. \end{rem}

\subsection{Counterexamples among general varieties}

We now show that, in general, none of the sets in the top row of~\eqref{diagram1} define \'etale sheaves, if we consider varieties that do not admit an embedding into an abelian variety.

\begin{propn}\label{BrNotSheaf}
Let $Y/k$ be a smooth projective variety over a number field such that $\Pic(\Ybar)$ is torsion free, $\Br(\Ybar)$ is finite, and $\Br(Y) \to \Br(\Ybar)^{\Gal(k)}$ is surjective.
\begin{enumerate}
	\item\label{it:1} If $Y(\A_k)_\bullet \ne Y(\A_k)_\bullet^{\Br}$, then there exists a finite Galois extension $L/k$ such that the image of $Y(\A_k)_\bullet^{\Br}$ in $Y(\A_L)_\bullet$ is a proper subset of $\left(Y(\A_L)_\bullet^{\Br}\right)^{\Gal(L/k)}$.
	\item\label{it:2} If $Y(k) = \emptyset$, $Y(\A_k)_\bullet \ne \emptyset$ and $\overline{Y(K)} = Y(\mathbb{A}_K)_\bullet^{\Br}$ for all finite Galois extensions $K/k$, then there exists a finite Galois extension $L/k$ such that $\overline{Y(L)}^{\Gal(L/k)} \ne \emptyset = \overline{Y(k)}$.
\end{enumerate}
\end{propn}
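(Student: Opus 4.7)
The plan for (1) is to show that over a sufficiently large finite Galois extension $L/k$, the Brauer--Manin pairing vanishes on the image of any fixed $x \in Y(\A_k)_\bullet \setminus Y(\A_k)_\bullet^{\Br}$. The three hypotheses combine to make $\Br(Y_L)/\Br_0(Y_L)$ a finite abelian group of bounded exponent whose classes all lift from a fixed intermediate extension; the degree factor $[L:L^*]$ appearing in the base-change formula for local invariants can then be used to annihilate every obstruction simultaneously.

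Concretely, I would first note that torsion-freeness of $\Pic(\Ybar)$ forces $\Pic^0(\Ybar)=0$ (a nonzero abelian variety always carries torsion), so $\Pic(\Ybar) = \NS(\Ybar) \cong \Z^r$. Fix a finite Galois extension $L_0/k$ over which $\Gal(L_0)$ acts trivially on both $\Pic(\Ybar)$ and the finite group $\Br(\Ybar)$. For $L \supseteq L_0$ one has $H^1(L,\Pic(\Ybar)) = \Hom_{\mathrm{cts}}(\Gal(L),\Z^r) = 0$ (continuous homomorphisms from a profinite group to a discrete torsion-free group vanish), and the low-degree sequence of the Leray spectral sequence $H^p(\Gal(L), H^q(\Ybar,\mathbb{G}_m)) \Rightarrow H^{p+q}(Y_L,\mathbb{G}_m)$ yields $\Br_1(Y_L) = \Br_0(Y_L)$ (writing $\Br_0$ and $\Br_1$ for the image of $\Br(L)$ and the kernel of $\Br(Y_L) \to \Br(\Ybar)$, respectively); hence $\Br(Y_L)/\Br_0(Y_L) \hookrightarrow \Br(\Ybar)$. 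As $L$ grows, the images of $\Br(Y_L) \to \Br(\Ybar)$ form an increasing chain inside the finite group $\Br(\Ybar)$ and therefore stabilize; fix a finite Galois $L^* \supseteq L_0$ realizing this stable subgroup $J$. Then for every $L \supseteq L^*$ and $\beta \in \Br(Y_L)$ there exists $\beta^* \in \Br(Y_{L^*})$ with $\beta \equiv \beta^*|_L \pmod{\Br_0(Y_L)}$.

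Now pick a finite Galois $L/k$ with $L \supseteq L^*$ and $N := \exp(\Br(\Ybar))$ dividing $[L:L^*]$ (for instance, the compositum of $L^*$ with a cyclic extension of $k$ of sufficient degree, which always exists over a number field). The standard base-change identity for local invariants gives
$$\langle x_L, \beta^*|_L \rangle_L \;=\; [L:L^*] \cdot \langle x_{L^*}, \beta^* \rangle_{L^*},$$
and since $N\beta^* \in \Br_0(Y_{L^*})$ pairs trivially with $x_{L^*}$ by reciprocity, the right-hand side has order dividing $N$ in $\Q/\Z$, which is killed by $[L:L^*]$. Hence $x_L \in Y(\A_L)_\bullet^{\Br}$, and since $x$ is automatically $\Gal(L/k)$-invariant yet lies outside $Y(\A_k)_\bullet^{\Br}$, this proves (1). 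Part (2) then follows formally: the hypotheses force $Y(\A_k)_\bullet^{\Br} = \overline{Y(k)} = \emptyset$ while $Y(\A_k)_\bullet \ne \emptyset$, so applying (1) yields a finite Galois $L/k$ with $(Y(\A_L)_\bullet^{\Br})^{\Gal(L/k)}$ nonempty, and invoking $\overline{Y(L)} = Y(\A_L)_\bullet^{\Br}$ at that $L$ gives $\overline{Y(L)}^{\Gal(L/k)} \ne \emptyset = \overline{Y(k)}$.

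I expect the main obstacle to be the combination of the Hochschild--Serre vanishing $\Br_1(Y_L) = \Br_0(Y_L)$ with the stabilization of images in $\Br(\Ybar)$: one needs torsion-freeness of $\Pic(\Ybar)$ to kill $H^1(L,\Pic(\Ybar))$, and finiteness of $\Br(\Ybar)$ to ensure that a single finite extension $L^*$ captures every Brauer class over $L$ modulo constants. The surjectivity hypothesis $\Br(Y) \to \Br(\Ybar)^{\Gal(k)}$ is not used directly, but it fits naturally into the setup in that it forces the stable image $J$ to contain $\Br(\Ybar)^{\Gal(k)}$.
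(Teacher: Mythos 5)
Your proof is correct and follows essentially the same route as the paper's, but you reconstruct from scratch what the paper simply cites: the paper invokes \cite[Lemma 3.3]{CreutzViray} (existence of infinitely many Galois extensions $L/k$ with $[L:k]$ divisible by $d=|\Br(Y)/\Br_0(Y)|$ and $\Res_{L/k}:\Br(Y)/\Br_0(Y)\to\Br(Y_L)/\Br_0(Y_L)$ surjective) together with \cite[Lemma 3.1(2)]{CreutzViray} (for such $L$, $Y(\A_k)_\bullet\subset Y(\A_L)_\bullet^{\Br}$). Your argument unpacks this: the Hochschild--Serre vanishing $\Br_1(Y_L)=\Br_0(Y_L)$ for $L\supseteq L_0$ using torsion-freeness of $\Pic(\Ybar)$, the bound $\Br(Y_L)/\Br_0(Y_L)\hookrightarrow\Br(\Ybar)$, the stabilization at some $L^*$, and then the degree/base-change computation. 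The main difference is that you work relative to the intermediate field $L^*$ rather than relative to $k$ directly (the paper's cited lemma arranges surjectivity of restriction \emph{from} $k$, where the hypothesis on $\Br(Y)\to\Br(\Ybar)^{\Gal(k)}$ plausibly plays a role). This is why you can honestly remark that the surjectivity hypothesis is not used in your version; that is a genuine small simplification worth noting. One loose end in your write-up: when you take ``the compositum of $L^*$ with a cyclic extension of $k$ of sufficient degree,'' the degree $[L^*M:L^*]=[M:M\cap L^*]$ need not be divisible by $N$ unless $M$ is chosen linearly disjoint from $L^*$ over $k$; this is arrangeable (over a number field one can always find a cyclic degree-$N$ extension ramified away from the ramification of $L^*$), but you should say so. Also, strictly speaking one should ensure $L$ is chosen so that the map $Y(\A_k)_\bullet\to Y(\A_L)_\bullet$ does not collapse $x$ onto the image of $Y(\A_k)_\bullet^{\Br}$ at a real place; since you have infinitely many choices of $L$ this is easily avoided (e.g.\ take $[L:k]$ odd), and the same implicit step appears in the paper.
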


\begin{proof}
	The assumptions in the first sentence imply that $\Br(Y)/\Br_0(Y)$ is finite, say of order $d$. By~\cite[Lemma 3.3]{CreutzViray} there are infinitely many Galois extensions of degree divisible by $d$ such that the restriction map $\Res_{L/k} : \Br(Y)/\Br_0(Y) \to \Br(Y_L)/\Br_0(Y_L)$ is surjective. For any such extension, \cite[Lemma 3.1(2)]{CreutzViray} gives that $Y(\A_k)_\bullet \subset Y(\A_L)_\bullet^{\Br}$, so every element $Y(\A_k)_\bullet$ lies in the $\Gal(L/k)$-invariant subset of $Y(\A_L)_\bullet^{\Br}$. The claims in \eqref{it:1} and \eqref{it:2} follow immediately.
\end{proof}

\begin{rem}\label{rem:BrNotSheaf}
Examples satisfying the conditions of Proposition~\ref{BrNotSheaf}\eqref{it:1} can be found among del Pezzo surfaces and Ch{\^a}telet surfaces. Ch{\^a}telet surfaces that are counterexamples to the Hasse principle satisfy the conditions in~Proposition~\ref{BrNotSheaf}\eqref{it:2} by \cite[Theorem B]{CTS}. 
 \end{rem}

 \begin{propn}\label{cor:enriques}
Let $Y/k$ be a smooth projective Enriques surface over a global field $k$ of characteristic not equal to $2$. Then the assignment $L \mapsto Y(\A_L)_\bullet^{\textup{f-ab}}$ does not define an \'etale sheaf.
\end{propn}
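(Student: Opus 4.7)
My strategy is to reduce the finite abelian descent obstruction on $Y$ to a single torsor — the K3 double cover of $Y$ — and then invoke Theorem~\ref{thm:etale}.

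First I would construct the K3 cover. Since the characteristic of $k$ differs from $2$, the canonical sheaf $\omega_Y$ is a nontrivial $2$-torsion element of $\Pic(Y)$ and the isomorphism $\omega_Y^{\otimes 2} \cong \mathcal{O}_Y$ descends to $k$ (because $H^0(\omega_Y^{\otimes 2})$ is a $k$-line whose base change to $\overline{k}$ is $H^0(\mathcal{O}_{\overline Y}) = \overline{k}$). The associated double cover $\pi\colon \tilde Y = \underline{\Spec}_Y(\mathcal{O}_Y \oplus \omega_Y) \to Y$ is therefore an \'etale torsor under the constant group scheme $\Z/2\Z$ over $k$, and its total space $\tilde Y$ is a K3 surface, in particular geometrically irreducible. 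Since $G = \Z/2\Z$ satisfies $G(\ksep) \ne 0$, Theorem~\ref{thm:etale} applies to $\pi$ and hands us a finite separable extension $K/k$ and a finite Galois extension $L/K$ witnessing that $L \mapsto Y(\A_L)^\pi_\bullet$ fails to be an \'etale sheaf.

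The bulk of the argument is then to verify the equality $Y(\A_K)^{\textup{f-ab}} = Y(\A_K)^\pi$ for every finite separable extension $K/k$. The inclusion $\subseteq$ is immediate since $\pi$ lies in the intersection defining $\textup{f-ab}$. For the reverse, I take $y \in Y(\A_K)^\pi$, pick $\xi \in H^1(K,\Z/2\Z)$ whose local images equal $(y_v^*\pi)_v$, and lift $y$ to an adelic point $x \in \tilde Y^\xi(\A_K)$ of the $\xi$-twist $\pi^\xi \colon \tilde Y^\xi \to Y$. Geometrically $\tilde Y^\xi$ is a K3 surface, and K3 surfaces in every characteristic are geometrically simply connected, satisfy $H^1(\mathcal{O}) = 0$, and have torsion-free Picard group. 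Together with the Frobenius and Kummer exact sequences these kill $H^1_{\textup{fppf}}(\overline{\tilde Y^\xi}, H)$ for each finite abelian group scheme $H$ over $K$, handling the \'etale, $\mu_p$, and $\alpha_p$ pieces of $H$ separately. Hochschild--Serre then forces every such fppf torsor over $\tilde Y^\xi$ to be the pullback of a class in $H^1_{\textup{fppf}}(K, H)$, and for any such constant torsor the adelic descent condition is vacuous, so $\tilde Y^\xi(\A_K)^{\textup{f-ab}} = \tilde Y^\xi(\A_K) \ni x$. The identity $y_v^* f = x_v^*(\pi^\xi)^*f$ for every torsor $f$ on $Y_K$ then yields $y \in Y(\A_K)^{\textup{f-ab}}$.

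Combining the two steps, the failure of \'etale sheafiness for $L \mapsto Y(\A_L)^\pi_\bullet$ transfers verbatim to the $\textup{f-ab}$ descent set. The principal obstacle is the cohomological vanishing statement used in the reduction: in positive odd characteristic it requires invoking the full geometric structure of K3 surfaces (simple-connectedness, vanishing $h^{0,1}$, and torsion-free Picard) in order to dispose of the infinitesimal $\mu_p$ and $\alpha_p$ components of a general finite abelian group scheme.
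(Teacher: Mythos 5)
Your proof is correct and follows the same route as the paper: pass to the K3 double cover $\pi\colon \tilde Y\to Y$, show that finite abelian descent on $Y$ reduces to descent along $\pi$, and then invoke Theorem~\ref{thm:etale}. The paper's version is considerably terser --- it simply states $Y(\A_L)_\bullet^f = Y(\A_L)_\bullet^{\eT} = Y(\A_L)_\bullet^{\textup{f-ab}}$ with the one-word justification that $Z$ is \'etale simply connected. Your write-up fills in the two points the paper leaves implicit: the construction of $\pi$ as a $\Z/2\Z$-torsor \emph{over $k$} (via the $k$-rationality of the trivialization of $\omega_Y^{\otimes 2}$), and, more substantially, the positive-characteristic bookkeeping showing $H^1_{\mathrm{fppf}}(\overline{\tilde Y^\xi},H)=0$ for \emph{all} finite abelian group schemes $H$, not just \'etale ones --- for which \'etale simple connectedness alone does not suffice and one needs, as you say, $h^{0,1}(\tilde Y^\xi)=0$ (to kill $\alpha_p$-torsors) together with torsion-freeness of $\Pic(\overline{\tilde Y^\xi})$ (to kill $\mu_p$-torsors), plus an induction over composition series of $H$. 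The reduction step itself (twist by $\xi$, lift the adelic point to $\tilde Y^\xi(\A_K)$, pull back an arbitrary abelian torsor, apply Hochschild--Serre) is standard and correct, and it delivers $Y(\A_K)^{\textup{f-ab}}=Y(\A_K)^\pi$ for every finite separable $K/k$, which is exactly what is needed to transfer the failure of sheafiness supplied by Theorem~\ref{thm:etale}.
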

\begin{proof}
Let $f: Z \to Y$ be a $K3$ cover of the Enriques surface, so that $Z \to Y$ is a torsor under $\Z/2\Z$. Since $Z$ is a $K3$ surface, it is \'etale simply connected, and so $Y(\A_L)_\bullet^f = Y(\A_L)_\bullet^{\eT} = Y(\A_L)_\bullet^{\textup{f-ab}}$. The result follows by Theorem~$\ref{thm:etale}$.
\end{proof}

\section{Isotrivial curves}
In this section we use results of the previous sections to generalise the main results of \cite{CVBM} to the case of isotrivial curves.

Fix a finite field $\F$ of characteristic $p$. Let $D$ be a smooth projective geometrically connected curve over $\F$, and let $k = \F(D)$ denote the function field of $D$. Throughout this section $C$ will denote a smooth projective and geometrically irreducible curve over $k$ of genus $g = g(C) \ge 2$ which we assume to be isotrivial. We also assume that $C$ has a $k$-rational divisor $z$ of degree $1$ and use this to define an embedding of $C$ into its Jacobian $J = \Jac(C)$ by the rule $x \mapsto [x-z]$. This assumption is justified by Remark~\ref{rem:3}.

There exists a finite extension $L/k$ with corresponding extension of constant fields $\F_L/\F$ and a curve $C_0/\F_L$ such that $C\times_k L \simeq C_0 \times_{\F_L} L$. For any such $L/k$ we also have that $J \times_k L \simeq J_0 \times_{\F_L} L$, where $J_0 = \Jac(C_0)$ is an abelian variety defined over $\F_L$. One can take $L/k$ to be separable because the moduli space of curves with sufficiently large level structure is a fine moduli space, and replacing $L$ by its Galois closure we obtain a Galois extension of $k$ trivialising $C$.
 
\begin{rem}
There are isotrivial varieties that are not separably isotrivial, meaning that they only become constant after a non-separable field extension, e.g. singular genus-changing curves in the sense of \cite{T}. It is possible that there exist smooth examples in higher dimension but we do not know of any.
\end{rem}

The set of places of $k$ is in bijection with the set $D^1$ of closed points of $D$. For $v \in D^1$ we denote the residue field of the completion $k_v$ by $\F_v$.  Note that since the valued field $k_v$ has equicharacteristic $p$, $\F_v \subseteq k_v$. We define $\mathbb{A}_{k, \F} := \prod_{v \in D^1} \F_v$, which is an $\F$-subalgebra of the usual adele ring $\A_k$. If $L/k$ is a finite extension with constant field extension $\F_L/\F$, then there exists a smooth projective curve $D'/\F_L$ with $L = \F_L(D')$, so we may define $\A_{L,\F_L}$ similarly.

\subsection{Locally constant adelic points}

We recall the definition of locally constant adelic points as in \cite[Section 2.2]{CVBM} (where they were called reduced adelic points) before extending this definition to isotrivial varieties. Suppose that $X/k$ is a constant variety so that $X = X_0 \times_{\F} k$. Since $D/\F$ is geometrically connected, we have natural equalities of sets
$$
\Hom_{/k}(\Spec(\mathbb{A}_k), X) = \Hom_{/\F}(\Spec(\mathbb{A}_k), X) = \Hom_{/\F}(\Spec(\mathbb{A}_k), X_0)\,.
$$
\begin{defn}\label{locconstadpts}
For a constant variety $X = X_0 \times_{\F} k$ over $k$ we define the \emph{locally constant adelic points} to be the set 
$$
X(\mathbb{A}_{k, \F}) := \Hom_{/\F}( \Spec(\mathbb{A}_{k, \F}), X_0)\,.
$$
\end{defn}

Note that $\mathbb{A}_{k, \F}$ is an $\F$-subalgebra of $\mathbb{A}_k$, so we have an inclusion
$$
X(\mathbb{A}_{k, \F}) \subseteq \Hom_{/\F}(\Spec(\mathbb{A}_k), X_0) = \Hom_{/k}(\Spec(\mathbb{A}_k), X) = X(\mathbb{A}_k)\,.
$$
Concretely, $X(\mathbb{A}_{k,\F}) = \prod_{v \in D^1} X_0(\F_v)$.

\begin{defn}
Suppose $X/k$ is an isotrivial variety and $L/k$ is a Galois extension with corresponding residue extension $\F_L/\F$ such that $X_L/L$ is a constant variety. Let $X_0/\F_L$ be the corresponding constant variety and let $\phi: X_L \to X_0 \times_{\F_L} L$ be an isomorphism. Define the \emph{locally constant adelic points} of $X/k$ to be the set
\[
	X(\A_{k,\F}) := X(\A_L)^{\Gal(L/k)} \cap \phi^{-1}((X_0 \times_{\F_L} L)(\A_{L,\F_L})) \subset X(\A_L)^{\Gal(L/k)} = X(\A_k)\,.
\]
\end{defn}
 The following two lemmas show that this definition does not depend on our choice of trivialising extension $L$ and isomorphism $\phi$.
 
\begin{lemma}\label{lem:welldefinediso}
Let $X/k$ be an isotrivial variety, and let $\phi, \phi': X_L \to X_0 \times_{\F} L$ be two isomorphisms of $L$ varieties. Then $\phi^{-1}((X_0\times_{\F} L)(\A_{L,\F})) = \phi'^{-1}((X_0\times_{\F} L)(\A_{L,\F}))$. In particular, the set $X(\A_{k, \F})$ does not depend on our choice of trivialising isomorphism $\phi$. 
\end{lemma}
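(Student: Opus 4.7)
The plan is to reduce to showing that the composite $L$-automorphism $\alpha := \phi' \circ \phi^{-1}$ of $Y_L := X_0 \times_{\F_L} L$ preserves the subset $Y_L(\A_{L,\F_L}) = \prod_w X_0(\F_w)$ of $Y_L(\A_L) = \prod_w X_0(L_w)$. Since $\phi$ induces a bijection on adelic points, this is equivalent to the desired equality $\phi^{-1}(Y_L(\A_{L,\F_L})) = \phi'^{-1}(Y_L(\A_{L,\F_L}))$, from which the ``in particular'' clause follows immediately.

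First I would show that $\alpha$ is defined over $\F_L$, i.e., that it comes from an $\F_L$-automorphism $\alpha_0 : X_0 \to X_0$ by base change. For the smooth projective curves of genus $\ge 2$ which are the intended application of this lemma, the automorphism functor $\underline{\Aut}_{\F_L}(X_0)$ is representable by a finite \'etale group scheme over $\F_L$. Since $\F_L$ is by definition the constant field of $L$, we have $L \cap \overline{\F} = \F_L$, so every $L$-valued point of this finite \'etale group scheme is already an $\F_L$-point. Hence $\alpha$ is the base change of some $\alpha_0 \in \Aut_{\F_L}(X_0)$.

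Next, for each place $w$ of $L$, base change of $\alpha_0$ along $\F_L \hookrightarrow \F_w$ gives an $\F_w$-automorphism of $X_0 \times_{\F_L} \F_w$, which acts as a bijection on the set $(X_0 \times_{\F_L} \F_w)(\F_w) = X_0(\F_w)$. Since the inclusion $X_0(\F_w) \hookrightarrow X_0(L_w)$ is compatible with this action, $\alpha$ sends $\prod_w X_0(\F_w)$ into itself; the same argument applied to $\alpha^{-1}$ gives the reverse inclusion.

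The main obstacle is the descent step, which crucially uses that $X_0$ has a finite automorphism group scheme. For more general isotrivial varieties (for instance abelian varieties, where translations provide $L$-automorphisms not defined over $\F_L$) this descent can genuinely fail, and one would instead need to exploit the Galois-invariance coming from the intersection with $X(\A_L)^{\Gal(L/k)}$ in the definition of $X(\A_{k,\F})$. For the curves of genus $\ge 2$ considered in this section, however, the direct approach above suffices.
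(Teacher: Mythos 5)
Your proof starts with the same reduction as the paper: replace the statement by the claim that $\alpha := \phi'\circ\phi^{-1}$ preserves the subset $(X_0\times_{\F_L}L)(\A_{L,\F_L})\subset (X_0\times_{\F_L}L)(\A_L)$. At that point, however, the paper simply asserts that this preservation is ``immediate by the definition of locally constant adelic points for constant varieties'' and stops, whereas you supply a substantive argument: you first descend $\alpha$ to an $\F_L$-automorphism $\alpha_0$ of $X_0$, using that for a curve of genus $\geq 2$ the automorphism functor is a finite \'etale $\F_L$-group scheme and that $\F_L$ is algebraically closed in $L$ (so $L$-points coincide with $\F_L$-points), and then check place-by-place that $\alpha_0$ preserves $X_0(\F_w)\subset X_0(L_w)$. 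This is the missing step: the set $\Hom_{\F_L}(\Spec\A_{L,\F_L},X_0)$ is carried into itself by post-composition with an $\F_L$-automorphism of $X_0$, but \emph{not} obviously by the action of an arbitrary $L$-automorphism of $X_0\times_{\F_L}L$, since the latter acts via the graph map $(f,\mathrm{str})\colon\Spec\A_L\to X_0\times_{\F_L}L$, whose second component does not factor through $\Spec\A_{L,\F_L}$. Your argument correctly closes this gap for the curves that are actually used in the paper.

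Your caveat about abelian varieties is a genuine observation: translation by a non-constant $P\in A_0(L)$ is an $L$-automorphism of $A_0\times_{\F_L}L$ that does not descend to $\F_L$ and does not preserve $\prod_w A_0(\F_w)$ (at all but finitely many $w$ the image of $P$ in $A_0(L_w)$ lies in $A_0(\mathcal{O}_w)$ but not in $A_0(\F_w)$), so the lemma as literally stated for all isotrivial varieties and arbitrary trivialising isomorphisms is problematic. In the paper's applications one only ever needs the Jacobian together with a trivialisation induced by, or compatible with, the trivialisation of the curve, which rules out such translations, so the issue does not propagate; but your proof is more careful and more honest about where the argument actually lives than the paper's one-line justification.
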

\begin{proof}
Note that $\phi' \circ \phi^{-1}$ gives us an automorphism of $X_0 \times_{\F} L$ as a variety over $L$, so the result is equivalent to saying that the set $(X_0\times_{\F} L)(\A_{L,\F})$ is stable under the action of $\mathrm{Aut}_{/L}(X_0 \times_{\F}L)$.  This is immediate by the definition of locally constant adelic points for constant varieties.
\end{proof}
\begin{lemma}\label{lem:welldefinedext}
Let $X/k$ be an isotrivial variety. Then the definition of $X(\A_{k, \F})$ does not depend on our choice of trivialising extension.
\end{lemma}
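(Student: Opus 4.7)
The strategy is to reduce to the case of nested trivialising extensions. Given two trivialising Galois extensions $L_1/k$ and $L_2/k$, I would pass to a common Galois extension $M/k$ containing both (for instance, the Galois closure of the compositum $L_1 L_2$), which again trivialises $X$. It therefore suffices to prove the following: whenever $k \subset L \subset L'$ are Galois extensions with $L$ trivialising $X$, the $L$- and $L'$-definitions of $X(\A_{k,\F})$ coincide.

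Fix an isomorphism $\phi: X_L \to X_0 \times_{\F_L} L$ witnessing the $L$-trivialisation. Base changing to $L'$ yields $\phi' := \phi \times_L L' : X_{L'} \to X_0' \times_{\F_{L'}} L'$, where $X_0' := X_0 \times_{\F_L} \F_{L'}$. By Lemma~\ref{lem:welldefinediso} the $L'$-definition is independent of the chosen trivialisation, so it may be computed using $\phi'$. A point $(x_v) \in X(\A_k) \subset X(\A_L) \subset X(\A_{L'})$ then lies in the $L$-version of $X(\A_{k,\F})$ precisely when $\phi(x_w) \in X_0(\F_{L,w})$ for every place $w$ of $L$, and in the $L'$-version precisely when $\phi'(x_{w'}) \in X_0'(\F_{L',w'}) = X_0(\F_{L',w'})$ for every place $w'$ of $L'$.

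For $w'$ lying above $w$, the component $\phi'(x_{w'})$ is the image of $\phi(x_w)$ under the map $X_0(L_w) \to X_0(L'_{w'})$ induced by the inclusion $L_w \hookrightarrow L'_{w'}$. The implication ``$L$-locally constant $\Rightarrow$ $L'$-locally constant'' is then immediate because $X_0(\F_{L,w})$ maps into $X_0(\F_{L',w'})$ via $\F_{L,w} \hookrightarrow \F_{L',w'}$. For the converse, I would work on an affine chart of $X_0$ and observe that the hypothesis says the coordinates of $\phi(x_w) \in L_w$ lie in $\F_{L',w'}$ when viewed inside $L'_{w'}$. The substantive input is the field identity
\[
L_w \cap \F_{L',w'} = \F_{L,w}
\]
inside $L'_{w'}$, which holds because $\F_{L',w'}$ is a finite field whose elements are algebraic over $\F$, while $\F_{L,w}$ is the maximal algebraic extension of $\F$ inside the local field $L_w$. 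This forces $\phi(x_w) \in X_0(\F_{L,w})$, completing the reverse implication.

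The main potential obstacle is essentially bookkeeping the various base changes and ensuring the trivialisations on both levels are compatible, which is handled by Lemma~\ref{lem:welldefinediso}; the only genuine mathematical content is the field-intersection identity above, which is a standard fact about local fields of equal characteristic.
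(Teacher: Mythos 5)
Your proposal is correct and takes essentially the same approach as the paper: reduce to nested trivialising extensions by passing to a compositum (the paper uses $K = LL'$), and then use the compatibility of locally constant adelic points with constant base change. The only difference is that you spell out the underlying local-field identity $L_w \cap \F_{L',w'} = \F_{L,w}$, whereas the paper simply asserts the resulting equality $X_L(\A_{L,\F_L}) = X_K(\A_{K,\F_K}) \cap X_L(\A_L)$ without proof.
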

\begin{proof}
Let $L, L'$ be two trivialising extensions for $X/k$, and consider the extension $K := LL'$. Since $X_L$ is constant, we have that $X_L( \A_{L, \F_L}) = X_K(\A_{K, \F_K}) \cap X_L(\A_L)$, and similarly for $X_{L'}(\A_{L', \F_{L'}})$. Therefore 
$$
X_L( \A_{L, \F_L}) \cap X(\A_k) = X_K(\A_{K, \F_K}) \cap X(\A_k) = X_{L'}( \A_{L', \F_{L'}}) \cap X(\A_k)
$$
as required.
\end{proof}

\begin{rem}
Locally constant adelic points are defined precisely so that they define an étale sheaf in the sense of Definition~$\ref{def:EtSheaf}$, and their use is justified by the property they are shown to satisfy in Theorem $\ref{thm:Frobdescent}$.
\end{rem}
\subsection{The Frobenius map on isotrivial varieties}

Let $X/k$ be an isotrivial variety and $L/k$ a Galois extension such that there is an isomorphism $\phi: X_L \simeq X_0 \times_{\F_L} L$ for some $X_0/\F_L$, where $\F_L/\F$ is the residue extension corresponding to $L/k$. So $X$ is a twist of $X_0$ and is thus classified by some class in $H^1(\Gal(L/k),\Aut(X_0))$. This class is represented by some cocycle $\xi:\Gal(L/k)\to \Aut(X_0)$ and the image of $\xi$ is some finite subset of $\Aut(X_0)$. We can extend $\F_L$ (and subsequently $L$) and assume that these finitely many automorphisms of $X_0$ are defined over $\F_L$. We say that $\phi$ has trivialised Galois action in this case.

\begin{lemma}\label{lem:Frob}
Suppose $X/k$ is an isotrivial variety and $L/k$ is a Galois extension such that there is an isomorphism $\phi: X_L \simeq X_0 \times_{\F_L} L$ with trivialised Galois action in the above sense for some $X_0/\F_L$ where $\F_L/\F$ is the residue extension corresponding to $L/k$. Let $m:=[\F_L:\F]$. The relative Frobenius morphism $F_{X_0/\F_L} : X_0 \to X_0$ induces a morphism $F_{X_L/L}: X_L \to X_L$ that descends to $k$ and yields a morphism $X \to X$, which we call $F^m_{X/k}$.
\end{lemma}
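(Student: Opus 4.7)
The plan is to proceed by Galois descent. The $|\F_L|$-power Frobenius $F_{X_0/\F_L}: X_0 \to X_0$ is an $\F_L$-morphism, so base-changing along $\F_L \hookrightarrow L$ yields an $L$-endomorphism $F := F_{X_0/\F_L} \times_{\F_L} \mathrm{id}_L$ of $X_0 \times_{\F_L} L$, and transporting across $\phi$ defines the candidate $F_{X_L/L} := \phi^{-1} \circ F \circ \phi$. Galois descent for morphisms then reduces the lemma to verifying that $F_{X_L/L}$ commutes with the canonical $\Gal(L/k)$-action on $X_L = X \times_k L$.

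The main step is checking this equivariance. First I would translate the descent datum on $X_L$ across $\phi$ into an action on $X_0 \times_{\F_L} L$: for $\sigma \in \Gal(L/k)$, comparing $\phi$ with $\sigma^*\phi$ shows that the resulting action factors as $(\xi(\sigma) \times_{\F_L} \mathrm{id}_L) \circ \sigma_{\mathrm{nat}}$, where $\sigma_{\mathrm{nat}}$ is the natural action induced by $\sigma$ acting on $L$ (and on the first factor through $\sigma|_{\F_L} \in \Gal(\F_L/\F)$), and $\xi(\sigma) \in \Aut_{\F_L}(X_0)$ is the cocycle value, now an honest $\F_L$-automorphism by the trivialised-Galois-action hypothesis. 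One then checks that $F$ commutes with each of these two pieces. Commutativity of $F$ with $\sigma_{\mathrm{nat}}$ is functoriality of the relative Frobenius with respect to change of base, combined with the fact that elements of $\Gal(\F_L/\F)$ are iterates of the absolute Frobenius of $\F_L$ and hence commute with $F_{X_0/\F_L}$. Commutativity of $F$ with $\xi(\sigma) \times \mathrm{id}_L$ reduces, after cancelling the identity on $L$, to the identity $F_{X_0/\F_L} \circ \xi(\sigma) = \xi(\sigma) \circ F_{X_0/\F_L}$, which holds because $\xi(\sigma)$ is an $\F_L$-morphism and the $q$-power Frobenius on any $\F_q$-scheme commutes with every $\F_q$-morphism (one verifies locally via $g^{\sharp}(a^q) = g^{\sharp}(a)^q$ for $\F_q$-algebra maps $g^{\sharp}$).

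Having verified the equivariance, Galois descent produces a unique $k$-endomorphism of $X$, which we name $F^m_{X/k}$, whose base change to $L$ recovers $F_{X_L/L}$. The main technical obstacle is the bookkeeping required to justify the decomposition $(\xi(\sigma) \times \mathrm{id}_L) \circ \sigma_{\mathrm{nat}}$ of the descent datum transported across $\phi$, especially when $\sigma|_{\F_L}$ is nontrivial; once this decomposition is in hand, the two commutativity checks are each essentially one line.
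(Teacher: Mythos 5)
Your proposal is correct and follows essentially the same route as the paper: define $F_{X_L/L} := \phi^{-1} \circ (F_{X_0/\F_L} \times_{\F_L} \mathrm{id}_L) \circ \phi$ and verify Galois equivariance using that the cocycle values $\xi(\sigma)$ are $\F_L$-automorphisms (hence commute with Frobenius). The only difference is that you spell out the commutation of $F$ with the "natural" part $\sigma_{\mathrm{nat}}$ of the descent datum, a step the paper uses implicitly when it writes $F_{X_L/L}^{\sigma} = (\phi^{-1})^{\sigma}\circ F_{X_0/\F_L} \circ \phi^{\sigma}$ without a $\sigma$ on the middle factor.
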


\begin{example}
	Consider an isotrivial curve $C : ty^2 = f(x)$ over $\F_p(t)$ with $f(x) \in \F_p[x]$ for an odd prime $p$. The relative Frobenius for $X/k$ is the morphism $F : C \to C^{(p)}$ given on coordinates by raising to the $p$-th power, where $C^{(p)}/\F$ is the curve given by $t^py^2 = f(x)$. Since $p$ is odd, we have $C^{(p)} \simeq C$ by the map $(x,y) \mapsto (x,t^{\frac{p-1}{2}}y)$. The Galois extension $L = \F_p(t^{1/2})/\F_p(t)$ trivialises $C$. The morphism $F_{C/k}^1$ constructed in the lemma is the composition of $F$ and the isomorphism $C^{(p)}\simeq C$.
\end{example}

\begin{proof}[Proof of Lemma~\ref{lem:Frob}]

By assumption, the morphism $\phi$ has trivialised Galois action, so the 
values of the cocycle $\xi(\sigma) = \phi^{\sigma}\circ\phi^{-1} \in \Aut(X_0)$
are defined over $\F_L$ thus commute with $F_{X_0/\F_L} : X_0 \to X_0$.
Therefore $F_{X_L/L} := \phi^{-1}\circ F_{X_0/\F_L} \circ \phi$ satisfies
$$F_{X_L/L}^{\sigma} = (\phi^{-1})^{\sigma}\circ F_{X_0/\F_L} \circ \phi^{\sigma} = \phi^{-1}\circ \xi(\sigma)^{-1} \circ F_{X_0/\F_L} \circ \xi(\sigma) \circ \phi = F_{X_L/L}$$
so it descends to $k$, as claimed.

\end{proof}

\subsection{Sheafiness of $\overline{C(L)}$ in the isotrivial case}

Recall that $\overline{C(k)}$ denotes the topological closure of $C(k)$ in $C(\A_k)$. We now prove that $\overline{C(k)}$ defines an \'etale sheaf in the isotrivial case.

\begin{lemma}\label{lem:CbarIsotrivial}
	Let $C/k$ be a smooth projective geometrically irreducible and isotrivial curve of genus at least $2$ over $k$. Let $L/k$ be a finite Galois extension. Then $\overline{C(L)}^{\Gal(L/k)} = \overline{C(k)}$.
\end{lemma}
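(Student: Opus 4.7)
The plan is to reduce to the case where $L$ trivialises $C$, and then exploit finiteness of $C(L)$ via the de Franchis--Severi theorem. The containment $\overline{C(k)} \subseteq \overline{C(L)}^{\Gal(L/k)}$ is immediate from functoriality of closures under the continuous inclusion $C(\A_k) \hookrightarrow C(\A_L)$, so the content lies in the reverse direction.

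For the reduction, I would choose a finite Galois extension $L'/k$ containing $L$ such that $C_{L'}$ is constant (for instance the Galois closure over $k$ of the compositum of $L$ with any separable trivialising extension for $C/k$, which exists by the discussion preceding the lemma). Granting the lemma over $L'$, take $x \in \overline{C(L)}^{\Gal(L/k)}$. The continuous inclusion $C(\A_L) \hookrightarrow C(\A_{L'})$ sends $C(L)$ into $C(L')$, and hence $\overline{C(L)}$ into $\overline{C(L')}$; compatibility of the Galois actions through the surjection $\Gal(L'/k) \twoheadrightarrow \Gal(L/k)$ shows the image of $x$ is $\Gal(L'/k)$-invariant. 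By the trivialising case applied to $L'$, this image lies in $\overline{C(k)}$, and so $x \in \overline{C(k)}$ after the identification $C(\A_L)^{\Gal(L/k)} = C(\A_k)$.

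It remains to treat the case $C_L \simeq C_0 \times_{\F_L} L$ for a smooth projective curve $C_0/\F_L$ of genus $\ge 2$. Writing $L = \F_L(D')$ for a smooth projective curve $D'/\F_L$, the points $C(L)$ are naturally in bijection with $\Hom_{\F_L}(\Spec L, C_0)$, which in turn equals $\Hom_{\F_L}(D', C_0)$ since $C_0$ is proper and rational maps from the smooth curve $D'$ extend. This set decomposes into constant morphisms, bijective with the finite set $C_0(\F_L)$, and nonconstant morphisms, a finite set by the de Franchis--Severi theorem, which is valid in arbitrary characteristic for target curves of genus $\ge 2$. Hence $C(L)$ is finite, and being a finite subset of the Hausdorff space $C(\A_L)$ it is closed, yielding $\overline{C(L)} = C(L)$; likewise $\overline{C(k)} = C(k)$.

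The conclusion is then formal: $\overline{C(L)}^{\Gal(L/k)} = C(L)^{\Gal(L/k)} = C(k) = \overline{C(k)}$, the middle equality being the defining property of a $k$-scheme. The principal ingredient is the finiteness of $C(L)$ once $L$ trivialises $C$; the reduction step is otherwise routine provided one handles the $\Gal(L'/k) \twoheadrightarrow \Gal(L/k)$-equivariance of the base-change maps between adelic spaces correctly.
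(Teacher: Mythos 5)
Your reduction to the trivialising case is fine and matches the paper's implicit reduction, and the containment $\overline{C(k)} \subseteq \overline{C(L)}^{\Gal(L/k)}$ is indeed routine. The gap is in the main step: the claim that $C(L)$ is finite once $L$ trivialises $C$ is \emph{false} in positive characteristic, which is the only characteristic relevant here.

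When $C_L \simeq C_0 \times_{\F_L} L$ with $L = \F_L(D')$ and $C_0/\F_L$ of genus $\ge 2$, the set $\operatorname{Mor}_{\F_L}(D', C_0)$ is typically infinite. Fix any nonconstant $\phi : D' \to C_0$; post-composing with powers of the $|\F_L|$-power Frobenius $F_{C_0/\F_L}$ (or pre-composing with $F_{D'/\F_L}$) yields infinitely many distinct nonconstant $\F_L$-morphisms $D' \to C_0$. The de Franchis--Severi theorem in characteristic $p$ does not say the set of morphisms is finite — it says the set of \emph{separable} morphisms is finite, equivalently that $C(L)/F$ is finite where $F$ denotes Frobenius twisting (this is exactly the form cited and used in the paper). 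Your own plan already makes the error visible: if $C(L)$ were finite, then $C(L)$ would be closed, so $\overline{C(L)} = C(L)$, and the same reasoning over $k$ would force $\overline{C(k)} = C(k)$ for \emph{every} isotrivial curve of genus $\ge 2$; but the introduction of the paper explicitly notes that there are isotrivial curves of every genus with $C(k) \ne \overline{C(k)}$, precisely because the Mordell statement fails for isotrivial curves over function fields.

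The actual proof in the paper works with the finite set $C(L)/F$ of Frobenius-orbits and a finite set of separable representatives $\phi_1, \dots, \phi_m$. It then produces a finite set $S$ of places whose reductions separate all Frobenius twists $F^a\phi_i \ne F^b\phi_j$, and uses continuity of the reduction retraction $C(\A_L) \to C(\A_{L,\F_L})$ together with a metric bounded-away-from-zero argument to show that any Galois-invariant limit of a sequence in $C(L)$ is already a limit of a sequence in $C(k)$. None of this is captured by asserting finiteness of $C(L)$; you need an argument that takes Frobenius twisting into account, which is the genuine content of the lemma.
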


\begin{proof}
	We claim that we can assume without loss of generality that $L/k$ trivialises $C/k$,  i.e., $L/k$ is such that $C\times_k L \simeq C_0 \times_{\F_L} L$ for some curve $C_0/\F_L$. Let $K/k$ be a Galois extension such that $L \subseteq K$ and $K/k$ trivialises $C/k$. Then $\overline{C(K)}^{\Gal(K/k)} = \left(\overline{C(K)}^{(\Gal(K/L))}\right)^{\Gal(L/k)}$, so showing the result for Galois extensions that trivialise $C/k$ implies the result for all Galois extensions.

	Let $D_L$ be the smooth projective curve with $L = \F_L(D_L)$. Let $F = F_{C_0/\F_L}$ be the relative Frobenius morphism. By a theorem of de Franchis (\cite[pg 223-224]{Lang} or \cite[pg 3]{CVetale}) the set $C(L)/F$ is finite. Choose a set of distinct elements $\phi_1,\dots,\phi_m \in C(L) = \operatorname{Mor}_{\F_L}(D_L,C_0)$ representing all morphisms from $D_L$ to $C_0$ up to Frobenius twisting. Concretely this means that for any $\phi \in C(L)$, there exists integers $a,b \ge 0$ and an $i \in \{1,\dots,m\}$ such that $F^a\phi = F^b\phi_i$. We can assume that the $\phi_i$ are distinct, separable and the set $\{\phi_1,\dots,\phi_m \}$ is closed under the action of $\Gal(L/k)$. The map $C(L)/F \to \operatorname{Map}(D_L^1,C_0^1)$ is injective by \cite[Proposition 2.3]{Stix02} and there are only finitely many $\phi_i$, so we can find a finite set $S$ of places $v \in D_L^1$ such that the elements $r_S(\phi_i) := (r_v(\phi_i))_{v \in S} \in \prod_{v \in S}C_0(\F_v)$ are distinct for $i \ne j$, where $r_v : C_0(L_v) \to C_0(\F_v)$ is the reduction map. Enlarging $S$ if needed, we can assume that $r_S(F^a\phi_i) \ne r_S(F^b\phi_j)$ for any $a,b \ge 0$ and $i\ne j$. Note that $r_v(F^a\phi_i) = F^a(r_v(\phi_i))$, where on the right $F$ acts via $\Gal(\F_L)$ on $C_0(\F_v)$.
	
	Let $\rho_v$ be metrics inducing the $v$-adic topology on $C(L_v)$ and let $\rho$ be a product metric of these inducing the adelic topology on $C(\A_L)$. The reduction maps give rise to a continuous retraction $r = (r_v) : C(\A_L) \to C(\A_{L,\F_L})$. By the discussion above, the set of distances $\{ \rho(r(F^a\phi_i),r(F^b\phi_j)) \;|\; a,b \ge 0 \text{ and } i \ne j \}$ is bounded away from $0$.

	Suppose $P_n \in C(L)$ converge to $P \in \overline{C(L)} \cap C(\A_k)$. For any $\sigma \in \Gal(L/k)$ the sequence $\sigma(P_n)$ converges to $\sigma(P) = P$ in $C(\A_L)$, since the induced map $\sigma : C(\A_L) \to C(\A_L)$ is continuous. Thus the sequence of real numbers $\rho(P_n,\sigma(P_n))$ converges to $0$. For all $n \ge 1$ we have integers $a_n,b_n \ge 0$ and $i_n \in \{ 1,\dots, m\}$ such that $F^{a_n}P_n = F^{b_n}\phi_{i_n}$. Since $F : C(\A_L) \to C(\A_L)$ is continuous and commutes with the action of $\sigma$, we find that the sequence $\rho(F^{b_n}\phi_{i_n},F^{b_n}\sigma(\phi_{i_n}))$ converges to $0$. Since the reduction map is continuous this implies that $\rho(r(F^{b_n}\phi_{i_n}),r(F^{b_n}\sigma(\phi_{i_n})))$ converge to $0$. These distances are bounded away from zero when $\phi_{i_n} \ne \sigma(\phi_{i_n})$. So for all large enough $n$ we must have $\sigma(\phi_{i_n}) = \phi_{i_n}$. Since the action of $\sigma$ commutes with $F$ we find that, for all large enough $n$, we have $F^{a_n} \sigma(P_n) = F^{a_n} P_n$ and so $P_n = \sigma(P_n)$ as well. Thus the $P_n$ are eventually in $C(L)^{\Gal(L/k)} = C(k)$. Hence $P \in \overline{C(k)}$.	
\end{proof}

\subsection{Frobenius descent}

Let $L/k$ be the minimal Galois extension trivialising the isotrivial curve $C/k$. Then $L/k$ also trivialises $J = \Jac(C)$ and Lemma~\ref{lem:Frob} gives an isogeny $F^m_{J/k} : J \to J$ where $m = [\F_L:\F]$. 

\begin{lemma}\label{lem:lcFrob}
The locally constant adelic points of the Jacobian satisfy $J(\A_{k,\F}) \subset F^m_{J/k}(J(\A_k))$.
\end{lemma}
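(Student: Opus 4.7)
The plan is to work over the trivialising extension $L/k$ and exploit the fact that the relative Frobenius $F^m_{J_0/\F_L}$ is purely inseparable, hence bijective on points over any finite field. First I would fix an isomorphism $\phi \colon J_L \xrightarrow{\sim} J_0 \times_{\F_L} L$ with trivialised Galois action as in Lemma~\ref{lem:Frob}, so that $F^m_{J/k}$ base changes to $\phi^{-1}\circ F^m_{J_0/\F_L}\circ \phi$. Given $(P_v) \in J(\A_{k,\F}) \subseteq J(\A_L)^{\Gal(L/k)}$, transporting via $\phi$ yields a locally constant adelic point $(\tilde{P}_w) \in \prod_{w} J_0(\F_w)$, where $\F_w$ denotes the residue field of $L_w$.

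The key technical step is to show that $F^m_{J_0/\F_L} \colon J_0(\F_w) \to J_0(\F_w)$ is a bijection for each place $w$. Since $F^m_{J_0/\F_L}$ is the relative $|\F_L|$-power Frobenius, it is purely inseparable, so its kernel is an infinitesimal group scheme and therefore has no nontrivial points over any reduced $\F_L$-algebra. In particular the map is injective on $J_0(\F_w)$, hence bijective by finiteness of $J_0(\F_w)$. Letting $\tilde{R}_w$ be the unique preimage of $\tilde{P}_w$ and pulling back along $\phi^{-1}$ gives an adelic point $(R_v) \in J(\A_L)$ satisfying $F^m_{J/k}((R_v)) = (P_v)$.

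It remains to check that $(R_v)$ is $\Gal(L/k)$-fixed, so that it descends to a point of $J(\A_k)$. Since $F^m_{J/k}$ is a $k$-morphism, it is Galois-equivariant on $J(\A_L)$; combined with the fact that $(P_v)$ is fixed by $\Gal(L/k)$, this gives $F^m_{J/k}(\sigma((R_v))) = F^m_{J/k}((R_v))$ for every $\sigma \in \Gal(L/k)$. The same infinitesimal-kernel argument as above shows that $F^m_{J/k}$ is injective on $J(L_w)$ for every $w$, and therefore on $J(\A_L)$, forcing $\sigma((R_v)) = (R_v)$. Thus $(R_v) \in J(\A_L)^{\Gal(L/k)} = J(\A_k)$, yielding the required preimage.

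The main obstacle is establishing the bijectivity of $F^m_{J_0/\F_L}$ on residue field points; once this is in hand, both the existence of a lift and its Galois invariance follow formally from injectivity of Frobenius on $J$-points over any field, together with the fact that $F^m_{J/k}$ descends from $L$ to $k$. I do not anticipate additional subtleties beyond tracing the definitions of locally constant adelic points carefully through $\phi$.
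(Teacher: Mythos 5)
Your proof is correct and follows the same route the paper has in mind: the paper's one‑line argument ("clear for constant varieties and Frobenius commutes with the Galois action") is precisely the pair of observations you spell out, namely that the relative Frobenius is purely inseparable and hence bijective on finite‑field points of the trivialised model, and that $F^m_{J/k}$ is $\Gal(L/k)$‑equivariant with injective (infinitesimal‑kernel) behaviour on $J(L_w)$, which forces the lift to be Galois‑fixed. You have simply filled in the details the paper leaves implicit.
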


\begin{proof}
	
	This is clear for constant varieties and Frobenius commutes with the Galois action.
\end{proof}

 For any $n \ge 1$, the $n$-fold composition of $F^m_{J/k}$ is an isogeny $\phi_n = (F^m_{J/k})^{\circ n} : J \to J$ whose kernel is a finite connected abelian group scheme. The pullback of $\phi_n$ along the embedding $C \to J$ is a torsor $F^n : Y \to C$ under $\ker(\phi_n)$. We define $C(\A_k)^{F^\infty} = \bigcap_{n \ge 1} C(\A_k)^{F^n}$.
 
 The following generalises~\cite[Theorem 1.2]{CVBM} to the case of isotrivial curves.

\begin{thm}\label{thm:Frobdescent}
Only global and locally constant adelic points survive infinite Frobenius descent, i.e.,
$
C(\mathbb{A}_k)^{F^{\infty}} = C(k) \cup C(\mathbb{A}_{k,\F}).
$
\end{thm}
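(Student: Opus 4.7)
The plan is to prove the two inclusions separately, reducing the nontrivial direction to the constant-curve case \cite[Theorem 1.2]{CVBM} by Galois descent along a trivializing extension.

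For the easy inclusion $C(k)\cup C(\A_{k,\F})\subseteq C(\A_k)^{F^\infty}$, the containment of $C(k)$ is immediate. For $P\in C(\A_{k,\F})$, I would exploit that $F^n\colon Y\to C$ is by construction the pullback of the isogeny $\phi_n=(F^m_{J/k})^{\circ n}\colon J\to J$ along the Albanese embedding $C\hookrightarrow J$. Lemma~\ref{lem:lcFrob}, together with the fact that $J(\A_{k,\F})$ is preserved under preimages by Frobenius (which commutes with reduction), gives $J(\A_{k,\F})\subseteq\phi_n(J(\A_k))$ for every $n$; so the image of $P$ in $J(\A_k)$ lifts through $\phi_n$, and pulling the lift back via $C\hookrightarrow J$ produces the desired adelic lift of $P$ under $F^n$.

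For the reverse inclusion, take $P\in C(\A_k)^{F^\infty}$ and fix a Galois extension $L/k$ trivializing $C$ via an isomorphism $\phi\colon C_L\simeq C_0\times_{\F_L}L$. By Lemma~\ref{lem:Frob}, the base change of $F^m_{C/k}$ to $L$ agrees through $\phi$ with the $\F_L$-relative Frobenius on $C_0$, and its iterates are cofinal in those of $F_{C_0/\F}$, so $P_L\in C(\A_L)^{F^\infty}$. The constant-curve case \cite[Theorem 1.2]{CVBM} applied to $C_0/\F_L$ then yields $P_L\in C(L)\cup C(\A_{L,\F_L})$. Since $P\in C(\A_k)=C(\A_L)^{\Gal(L/k)}$, the point $P_L$ is $\Gal(L/k)$-invariant: in the first alternative $P_L\in C(L)^{\Gal(L/k)}=C(k)$ and hence $P\in C(k)$, while in the second the very definition of $C(\A_{k,\F})$ as $C(\A_L)^{\Gal(L/k)}\cap\phi^{-1}(C_0(\A_{L,\F_L}))$ forces $P\in C(\A_{k,\F})$.

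The main obstacle I anticipate is the bookkeeping required to transport the Frobenius descent obstructions cleanly across the trivializing extension. Concretely, I would need to verify that $\ker(\phi_n)_L$ is identified under $\phi$ with the kernel of the $n$-fold $\F_L$-Frobenius isogeny on $J_0$, that the Albanese embedding $C\hookrightarrow J$ base changes to one of the natural Albanese embeddings $C_0\hookrightarrow J_0$ (at worst up to translation by a point of $J(k)$, which is harmless for torsor descent), and that the image of $C(\A_{k,\F})$ in $J(\A_k)$ actually lands in a subset to which Lemma~\ref{lem:lcFrob} can be iterated. Once this compatibility is in place, the remainder of the argument is essentially formal Galois descent together with the constant-curve input.
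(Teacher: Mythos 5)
Your argument is correct and amounts to an unpacking of the paper's sheaf-theoretic proof, with one genuine observation: you only use the easy inclusion $C(\A_k)^{F^n}\subset\bigl(C(\A_L)^{F^n}\bigr)^{\Gal(L/k)}$ (Stoll, Proposition 5.16) together with the sheaf property of $L\mapsto C(L)\cup C(\A_{L,\F_L})$, whereas the paper asserts that $L\mapsto C(\A_L)^{F^\infty}$ is itself an \'etale sheaf — the harder equality of Theorem~\ref{thm:Gksep0} — and then quotes Hartshorne's criterion that an inclusion of sheaves which is a local isomorphism is an isomorphism. Your route is slightly more economical: for this theorem the hard direction of Theorem~\ref{thm:Gksep0} is not strictly needed, since once the image of a surviving point lands in $\mathcal{F}(L)=C(L)\cup C(\A_{L,\F_L})$ at a trivializing $L$, Galois invariance and the sheafiness of $\mathcal{F}$ alone pull it back to $k$. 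The reason the paper nonetheless phrases things in the uniform sheaf framework is that the identical argument is reused for Theorem~\ref{thm:MW}, where the analogue of the easy inclusion for $C(\A_L)^{\Br}$ is not automatic and the full sheafiness input (Theorem~\ref{thm:Curves}) is genuinely required. The compatibilities you flag at the end — matching $\ker(\phi_n)_L$ with the Frobenius kernel on $J_0$, Albanese embeddings base-changing correctly up to translation, and Lemma~\ref{lem:lcFrob} iterating — are real bookkeeping points, but they are already absorbed into Lemma~\ref{lem:Frob} and the construction of the $\phi_n$, so no new argument is needed there.
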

\begin{proof}

For a finite separable extension $L/k$ with corresponding residue extension $\F_L/\F$ define $\mathcal{F}(L) := C(L) \cup C(\mathbb{A}_{L, \F_L})$ and $\mathcal{G}(L) := C(\mathbb{A}_L)^{\mathrm{ker}(F^\infty)}$. Then $\mathcal{F}$ and $\mathcal{G}$ are sheaves of sets on $\Spec(k)_\eT$, the latter by Theorem~\ref{thm:Gksep0}. By Lemma~\ref{lem:lcFrob}, all reduced adelic points survive Frobenius descent, so $\mathcal{F}$ is a subsheaf of $\mathcal{G}$. For any  $L/k$ trivialising $C$, we may apply \cite[Theorem 1.2]{CVBM} in order to obtain $\mathcal{F}(L) = \mathcal{G}(L)$. This shows that the inclusion $\mathcal{F} \subset \mathcal{G}$ is an isomorphism \'etale locally. Therefore $\mathcal{F} = \mathcal{G}$ by \cite[\S II Proposition 1.1]{Ha} and, in particular, we have have $\mathcal{F}(k) = \mathcal{G}(k)$.
 \end{proof}

\subsection{The Mordell-Weil Sieve}
\begin{defn}
We define the \emph{Mordell--Weil Sieve set} for an isotrivial curve embedded in its Jacobian to be the set
\[
	C^{\textup{MW-sieve}} := C(\A_{k,\F}) \cap \overline{J(k)} \,,
\]
with this intersection taking place in $J(\A_k)$.
\end{defn}
 When $C/k$ is a constant curve this agrees with the definition of $C^{\textup{MW-sieve}}$  in \cite{CVBM} so the following result generalises \cite[Theorem 1.1]{CVBM} to isotrivial curves.

\begin{thm}\label{thm:MW}
For $C/k$ a smooth, projective, isotrivial curve embedded in its Jacobian, we have an equality $C(\A_k)^{\Br} = C(k) \cup C^{\textup{MW-sieve}}$.
\end{thm}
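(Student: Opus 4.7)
The plan is to reduce the statement to the constant case already established in \cite[Theorem 1.1]{CVBM} by passing to a finite Galois extension $L/k$ that trivializes $C$, and then descending via the étale sheaf results of Section~\ref{sec:BM}. Such an $L$ exists by assumption, and the corresponding constant model $C_0/\F_L$ satisfies $C_L \simeq C_0\times_{\F_L}L$. Applying \cite[Theorem 1.1]{CVBM} to $C_L/L$ yields
$$C(\A_L)^{\Br} \;=\; C(L)\;\cup\;\bigl(C(\A_{L,\F_L})\cap\overline{J(L)}\bigr).$$
The notions of locally constant adelic points for isotrivial varieties and the sheaf properties proved in Section~\ref{sec:BM} are set up precisely so that this equality can be pulled back to $k$.

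For the inclusion $\subseteq$, take $x\in C(\A_k)^{\Br}$. The Brauer--Manin set is compatible with base change, so $x\in C(\A_L)^{\Br}$, and hence $x$ lies in one of the two pieces. If $x\in C(L)$, then $x$ is Galois invariant (as $C(\A_k)=C(\A_L)^{\Gal(L/k)}$) and the Galois equivariance of $C(L)\hookrightarrow C(\A_L)$ forces $x\in C(L)^{\Gal(L/k)}=C(k)$. Otherwise $x\in C(\A_{L,\F_L})\cap\overline{J(L)}$; since $x\in C(\A_L)^{\Gal(L/k)}$, the definition of $C(\A_{k,\F})$ gives $x\in C(\A_{k,\F})$, while the étale sheaf property of $\overline{J(k)}$ from Lemma~\ref{lem:1} gives $x\in\overline{J(L)}^{\Gal(L/k)}=\overline{J(k)}$. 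Together these place $x$ in $C^{\textup{MW-sieve}}$.

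For the reverse inclusion, $C(k)\subseteq C(\A_k)^{\Br}$ is immediate. Given $x\in C^{\textup{MW-sieve}}=C(\A_{k,\F})\cap\overline{J(k)}$, the definition of $C(\A_{k,\F})$ shows $x\in C(\A_{L,\F_L})$, and trivially $\overline{J(k)}\subseteq\overline{J(L)}$, so $x\in C^{\textup{MW-sieve}}_L\subseteq C(\A_L)^{\Br}$ by the constant case. Since $x\in C(\A_k)=C(\A_L)^{\Gal(L/k)}$, we conclude $x\in(C(\A_L)^{\Br})^{\Gal(L/k)}$. The étale sheaf property of $C(\A_k)^{\Br}$ from Theorem~\ref{thm:Curves}, combined with injectivity of $C(\A_k)\to C(\A_L)$ (automatic in the function field case as there are no archimedean places), then forces $x\in C(\A_k)^{\Br}$.

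The argument is essentially formal once one grants three ingredients: the constant case \cite[Theorem 1.1]{CVBM}; the étale sheaf property of the Brauer--Manin set for curves in their Jacobians, which is the content of Theorem~\ref{thm:Curves} (and relies in turn on Lemma~\ref{lem:CbarIsotrivial} for isotrivial curves); and the sheaf property of $\overline{J(k)}$ from Lemma~\ref{lem:1}. The main conceptual hurdle is therefore not in this proof itself but in having the right definitions in place: the isotrivial locally constant adelic points $C(\A_{k,\F})$ were defined via a trivializing extension precisely so that $C(\A_{k,\F}) = C(\A_L)^{\Gal(L/k)}\cap C(\A_{L,\F_L})$, which is exactly what the descent argument above needs.
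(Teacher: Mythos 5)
Your argument is correct and follows essentially the same route as the paper: apply the constant-case result of \cite[Theorem 1.1]{CVBM} over a trivialising Galois extension $L/k$, then descend to $k$ using the \'etale sheaf properties from Theorem~\ref{thm:Curves} and Lemma~\ref{lem:1} together with the definition of $C(\A_{k,\F})$. The paper packages this as an isomorphism of two subsheaves $\mathcal{F}\subset\mathcal{G}$ checked \'etale locally (invoking Hartshorne), whereas you unwind it into the two explicit inclusions, but the ingredients and the logic are the same.
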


\begin{proof}
	The assignments $L \mapsto \mathcal{G}(L) := C(\A_L)^{\Br}$ and $L \mapsto \mathcal{F}(L):= C(L) \cup \left( C(\A_{L,\F'}) \cap \overline{J(L)}\right)$ define \'etale sheaves by Theorem~\ref{thm:Curves} and Lemma~\ref{lem:1}. Moreover, $\mathcal{F}(L) \subset \mathcal{G}(L)$ for all finite separable extensions $L/k$ since $C(\A_L)^{\Br} \subset J(\A_L)^{\Br} = \overline{J(L)}$, where the final equality is given in the proof of Lemma~\ref{lem:2}, noting that $\Sha(A/L)$ is finite by Remark~\ref{rem:2}. For any $L/k$ trivialising $C$ we have $\mathcal{F}(L) = \mathcal{G}(L)$ by \cite[Theorem 1.1]{CVBM}, so we proceed as in the proof of Theorem~\ref{thm:Frobdescent} to obtain $\mathcal{G}(k) =\mathcal{F}(k)$ as required.
\end{proof}

\begin{rem}
If $C$ is an isotrivial curve that does not admit an embedding in its Jacobian, Remark~$\ref{rem:3}$ shows that $C(\A_k)^{\Br}=\emptyset$. For $C$ nonisotrivial over a global function field and of genus $\geq 2$, \cite[Theorem 1.1]{CVNonIsotriv} gives that $C(\A_k)^{\Br}=C(k)$. The above result is a step towards a characterisation of $C(\A_k)^{\Br}$ for curves in the remaining cases over global function fields.
\end{rem}


\end{document}